\setlist%
{%
 topsep=0pt,%
 labelsep=6pt,
 noitemsep,%
 leftmargin=*
}
\newtheorem{theorem}{Theorem}[section]
\newtheorem{corollary}{Corollary}[theorem]
\newtheorem{lemma}[theorem]{Lemma}
\newtheorem{definition}[theorem]{Definition}
\theoremstyle{remark}
\newtheorem{remark}[theorem]{Remark}
\numberwithin{equation}{section}
\providecommand*{\Xmath}[1]{\ensuremath{#1}\xspace}
\providecommand*{\XmathThreePar}[3]{%
  \ifthenelse{\equal{#3}{'}}{%
    \Xmath{#1_{#2}'}
  }{%
    \Xmath{#1_{#2}^{#3}} 
  }
}
\providecommand*{\XmathFourPar}[4]{\XmathThreePar{#1{#2}}{#3}{#4}}
\newcommand*{\Xop}[1]{\ensuremath{\mathrm{#1}\,}}
\renewcommand*{\Re}{\Xop{Re}}
\renewcommand*{\Im}{\Xop{Im}}
\newcommand*{\Abs}[2][]{\XBrace[#1]{\lvert}{\rvert}{#2}}
\newcommand*{\XBrace}[4][]{\Xmath{\mathopen#1#2#4\mathclose#1#3}}
\newcommand*{\RBrace}[2][]{\XBrace[#1]{(}{)}{#2}}
\newcommand*{\Quaternion}[3]{\XmathFourPar{}{#1}{#2}{#3}}
\newcommand*{\RealPart}[1]{\Re{#1}}
\newcommand*{\ImaginaryPart}[1]{\Im{#1}}
\newcommandtwoopt*{\qa}[2][][]{\Quaternion{a}{#1}{#2}}
\newcommandtwoopt*{\qb}[2][][]{\Quaternion{b}{#1}{#2}}
\newcommandtwoopt*{\qc}[2][][]{\Quaternion{c}{#1}{#2}}
\newcommandtwoopt*{\qi}[2][][]{\Quaternion{i}{#1}{#2}}
\newcommandtwoopt*{\qj}[2][][]{\Quaternion{j}{#1}{#2}}
\newcommandtwoopt*{\qk}[2][][]{\Quaternion{k}{#1}{#2}}
\newcommandtwoopt*{\qp}[2][][]{\Quaternion{p}{#1}{#2}}
\newcommandtwoopt*{\qq}[2][][]{\Quaternion{q}{#1}{#2}}
\newcommandtwoopt*{\qw}[2][][]{\Quaternion{w}{#1}{#2}}
\newcommandtwoopt*{\qx}[2][][]{\Quaternion{x}{#1}{#2}}
\newcommandtwoopt*{\qy}[2][][]{\Quaternion{y}{#1}{#2}}
\newcommandtwoopt*{\rea}[1][][]{\RealPart{a}{#1}}
\newcommandtwoopt*{\reb}[1][][]{\RealPart{b}{#1}}
\newcommandtwoopt*{\rec}[1][][]{\RealPart{c}{#1}}
\newcommandtwoopt*{\req}[1][][]{\RealPart{q}{#1}}
\newcommandtwoopt*{\rex}[1][][]{\RealPart{x}{#1}}
\newcommandtwoopt*{\ima}[2][][]{\Quaternion{\ImaginaryPart{a}}{#1}{#2}}
\newcommandtwoopt*{\imb}[2][][]{\Quaternion{\ImaginaryPart{b}}{#1}{#2}}
\newcommandtwoopt*{\imc}[2][][]{\Quaternion{\ImaginaryPart{c}}{#1}{#2}}
\newcommandtwoopt*{\imp}[2][][]{\Quaternion{\ImaginaryPart{p}}{#1}{#2}}
\newcommandtwoopt*{\imq}[2][][]{\Quaternion{\ImaginaryPart{q}}{#1}{#2}}
\newcommandtwoopt*{\imx}[2][][]{\Quaternion{\ImaginaryPart{x}}{#1}{#2}}
\newcommandtwoopt*{\imy}[2][][]{\Quaternion{\ImaginaryPart{y}}{#1}{#2}}
\begin{document}

\title{On solutions of singular Sylvester equations in quaternions}

\author{	Hristina Radak, Christian Scheunert, and Frank H. P. Fitzek}

\address{H.~Radak and F.~Fitzek are with the Deutsche Telekom Chair of Communication Networks, Dresden University of Technology, 01062 Dresden, Germany, Email: hristina.radak@tu-dresden.de}

\address{C.~Scheunert is with the Chair of Information Theory and Machine
 Learning, Dresden University of Technology, 01062 Dresden, Germany}

\address{F.~Fitzek is also with the Centre for Tactile Internet with Human-in-the-Loop (CeTI), Dresden University of Technology, 01062 Dresden,  Germany}

\maketitle


\begin{abstract}
The quaternionic equations $\qa\qx - \qx\qb = 0$ and $\qa\qx - \qx\qb = c$ are
investigated, which are called homogeneous and inhomogeneous Sylvester
equations, respectively. Conditions for the existence of solutions are provided.
In addition, the general and nonzero solutions to these equations are derived
applying quaternion square roots.
\end{abstract}


\section{Introduction}

A general linear function of a quaternion variable $\qx$ has the form
\cite{janovska+opfer2007_on}
\begin{align*}
  f_n(\qx) = \qa[1]\qx\qb[1] + \qa[2]\qx\qb[2] + \dots + \qa[n]\qx\qb[n],
\end{align*}
where the coefficients $\qa[j],\qb[j]$ for $j=1,2,\dots,n$ are nonzero
quaternions. Due to the noncommutativity of the quaternion product, $f_n$ is in general not homogeneous and hence not linear. However, $f_n$ is linear over the real numbers \cite{janovska+opfer2007_on}.
For $n=2$ and without loss of generality $f_n$ may be rewritten as
\begin{align*}
  f(\qx) = \qa\qx - \qx\qb.
\end{align*}
The function $f$ is called singular if there exists a nonzero quaternion $\qx$
for which $f(\qx)=0$, otherwise it is called regular. Given a quaternion $\qc$
the equation
\begin{align*}
  f(\qx) = \qa\qx - \qx\qb = \qc,
\end{align*}
is referred to as the Sylvester equation. The equation is called homogeneous if $\qc=0$ and inhomogeneous otherwise. 
The Sylvester equation is called singular if the associated function $f$ is
singular, otherwise regular. 

Johnson \cite{johnson1994_on} distinguished between the regular and singular cases and provided topological proofs for the existence and uniqueness of solutions over division rings, which directly apply to quaternions. A regular Sylvester equation has a unique solution
for every quaternion $\qc$. Hence, the homogeneous regular Sylvester equation
has only the trivial solution $\qx=0$. Closed-form expressions for the inhomogeneous regular Sylvester equation are given in \cite{janovska+opfer2007_on, porter1997_quaternion, shao+li++2020_basis-free}.

The singular Sylvester equation over quaternions poses greater challenges. Rodman \cite{rodman2014_topics} provided a comprehensive matrix-theoretic foundation for quaternionic linear algebra, including detailed analyses of eigenvalues and similarity transformations, yet stopped short of providing explicit closed-form solutions for the singular Sylvester equation. Tian provided solutions for both the homogeneous \cite{tian1999_similarity} and inhomogeneous \cite{tian2004_equations} cases, but without explicit proofs, presenting results in forms that obscure the connection to the original problem. Shpakivsky \cite{shpakivsky2011_linear} reduced the problem to a system of linear equations over the real number field, while Turner \cite{turner2006_engineerin} separated real and imaginary components, also transforming the problem into the real domain. Both approaches are algebraically complex, obscuring the link between their structure and the original quaternionic formulation and hindering extension to related problems. Subsequent work, such as that by Janovská and Opfer \cite{janovska+opfer2007_on}, managed only to characterize the solution space and propose numerical algorithms rather than analytic, closed-form results.

The main contribution of this work is the derivation of the analytic, closed-form nonzero solutions for the homogeneous and the general solution of the inhomogeneous singular Sylvester equations, as presented in Theorems \ref{theorem:NONZERO_SOLUTIONS_OF_THE_HOMOGENEOUS_SYLVESTER_EQUATION} and \ref{theorem:SOLUTION_OF_THE_INHOMOGENEOUS_SINGULAR_SYLVESTER_EQUATION_ROOTS}, respectively. By establishing a structural connection to quaternion square roots, we uncover a direct geometric decomposition of the solution space. This formulation facilitates a deeper understanding of the solution's orientation relative to the axes of $\qa$ and $\qb$, which is essential for engineering applications involving spatial rotations.


\section{Quaternion Fundamentals}

The real quaternion algebra is an associative division algebra over the field
of real numbers with basis $\{1,\qi,\qj,\qk\}$. The quaternion imaginary
units $\qi, \qj, \qk$ satisfy the relations $\qi\qj = -\qj\qi = \qk, 
\qj\qk = -\qk\qj = \qi, \qk\qi = -\qi\qk = \qj$, and 
$\qi[][2]\! = \qj[][2]\! = \qk[][2]\! = \qi\qj\qk = -1$. A quaternion
$\qa= \qa[0]+\qa[1]\qi+\qa[2]\qj+\qa[3]\qk$ can be written as
$\qa=\rea + \ima$, where $\rea=\qa[0]$ is the real scalar part of $\qa$
and $\ima = \qa[1]\qi+\qa[2]\qj+\qa[3]\qk$ is the vector imaginary
part of $\qa$. The conjugate of a quaternion $\qa$ is defined as
$\qa[][*] = \rea - \ima$. The norm of a quaternion $\qa$ is defined as 
$\Abs{\qa} = \sqrt{\smash[b]{\qa\qa[][*]}} = \sqrt{\smash[b]{\qa[][*]\!\qa}}
= \sqrt{\smash[b]{\qa[0][2] + \qa[1][2] + \qa[2][2] + \qa[3][2]}}$. A quaternion
$\qa$ is called unit if $\Abs{\qa} = 1$ and it is called pure if $\rea=0$. 
The inverse of a nonzero quaternion $\qa$ is defined as $\qa[][-1] = \qa[][*] /\Abs{\qa}^2$.
Given two quaternions $\qa, \qb$ the quaternion product $\qa\qb$ is defined as
\begin{align*}
  \qa\qb = (\rea)(\reb) - \ima \! \cdot \imb + (\rea)(\imb) + (\reb)(\ima) + \ima \times \imb
\end{align*}
where the symbols $(\, \cdot \, , \times)$ represent the vector scalar and cross product, respectively.
The quaternion product satisfies the following properties
\begin{align*}
\Abs{\qa\qb} = \Abs{\qa}\kern0.1em\Abs{\qb}, \qquad 
\Re{(\qa\qb)} = \Re{(\qb\qa)}.
\end{align*}
In particular, if $\qa, \qb, \qc$ are pure quaternions, then it holds
\begin{align*}
  \qa\qb = - \qa\cdot\qb + \qa\times\qb, \qquad
  -\qa[][2] = \Abs{\qa}^2 = \qa\cdot\qa, \qquad \qc \times (\qa \times \qb) = (\qc\cdot\qb)\qa - (\qc\cdot\qa)\qb.
\end{align*}

\begin{lemma}\label{lemma:AXIS_PARALLEL_QUATERNIONS}
For two nonreal quaternions $\qa, \qb$ it holds $\qa\qb = \qb\qa$ if and only if there
exists a real number $\lambda$ such that $\ima = \lambda(\imb)$.
\end{lemma}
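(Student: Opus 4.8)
The plan is to work with the decomposition $\qa = \rea + \ima$ and $\qb = \reb + \imb$ and to exploit the product formula for pure quaternions, $\imp\,\imq = -\imp\cdot\imq + \imp\times\imq$, which was recorded just above the statement. Writing out $\qa\qb$ and $\qb\qa$ using the general product formula, the real parts and the scalar-times-vector terms are symmetric in $\qa,\qb$ and cancel in the difference, so one computes
\begin{align*}
  \qa\qb - \qb\qa = (\ima\times\imb) - (\imb\times\ima) = 2\,(\ima\times\imb).
\end{align*}
Hence $\qa\qb = \qb\qa$ holds if and only if $\ima\times\imb = 0$.

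The remaining task is the elementary vector-calculus fact that two vectors in $\mathbb{R}^3$ have vanishing cross product if and only if they are parallel, together with a small argument handling the case where one of $\ima,\imb$ vanishes. For the "if" direction, if $\ima = \lambda\,\imb$ then $\ima\times\imb = \lambda\,(\imb\times\imb) = 0$ immediately, so $\qa\qb = \qb\qa$. For the "only if" direction, suppose $\ima\times\imb = 0$. Since $\qa,\qb$ are nonreal, $\imb \neq 0$. Decompose $\ima$ into the component along $\imb$ and the component orthogonal to it: set $\lambda = (\ima\cdot\imb)/\Abs{\imb}^2$ and $\imx = \ima - \lambda\,\imb$, so that $\imx\cdot\imb = 0$. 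Then $0 = \ima\times\imb = \imx\times\imb$, and taking norms with $\Abs{\imx\times\imb} = \Abs{\imx}\,\Abs{\imb}\,\sin\theta = \Abs{\imx}\,\Abs{\imb}$ (since $\imx\perp\imb$) forces $\Abs{\imx} = 0$, i.e. $\imx = 0$ and $\ima = \lambda\,\imb$ with $\lambda$ real.

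I do not expect any serious obstacle here; the only things to be careful about are (i) invoking the hypothesis that both quaternions are nonreal at exactly the point where a division by $\Abs{\imb}^2$ or by a nonzero norm is needed (this is where the "nonreal" assumption is essential — if $\qb$ were real, $\imb = 0$ and the statement would be vacuous or false), and (ii) presenting the cross-product-zero-implies-parallel step cleanly. An alternative to the orthogonal-projection argument in the last paragraph is to use the identity $\Abs{\ima\times\imb}^2 = \Abs{\ima}^2\Abs{\imb}^2 - (\ima\cdot\imb)^2$ (a form of Lagrange's identity, itself a consequence of the scalar triple product relation quoted above): from $\ima\times\imb = 0$ one gets $(\ima\cdot\imb)^2 = \Abs{\ima}^2\Abs{\imb}^2$, the equality case of Cauchy--Schwarz, which again yields proportionality with a real factor. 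Either route is a couple of lines; the substantive content is really just the computation $\qa\qb - \qb\qa = 2(\ima\times\imb)$.
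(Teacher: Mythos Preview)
Your proof is correct and follows essentially the same route as the paper's: both reduce the commutativity condition to the identity $\qa\qb - \qb\qa = 2(\ima \times \imb)$. The only difference is that you spell out the ``cross product zero implies parallel'' step (using the nonreal hypothesis to ensure $\imb \neq 0$), which the paper simply takes for granted.
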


\begin{proof}
Since $\Re{(\qa\qb)} = \Re{(\qb\qa)}$ it holds 
$\Re{(\qa\qb)} - \Re{(\qb\qa)} = 0$ such that
\begin{align*}
  \qa\qb - \qb\qa = \Im(\qa\qb) - \Im(\qb\qa) = \ima \times \imb - \imb \times \ima = 2\,(\ima\times\imb),
\end{align*}
which yields the assertion of the lemma.
\end{proof}

\begin{lemma}
\label{lemma:AXIS_PARALLEL_SAME_NORM_QUATERNIONS}
For two nonreal quaternions $\qa, \qb$ with $\rea = \reb$ and $\Abs{\qa} = \Abs{\qb}$ it
holds $\qa\qb = \qb\qa$ if and only if either $\ima = \imb$ or $\ima = -\imb$.
\end{lemma}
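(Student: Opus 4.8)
The plan is to derive both implications from Lemma~\ref{lemma:AXIS_PARALLEL_QUATERNIONS}, using the additional hypotheses $\rea=\reb$ and $\Abs{\qa}=\Abs{\qb}$ to sharpen its conclusion. Concretely, Lemma~\ref{lemma:AXIS_PARALLEL_QUATERNIONS} already equates commutativity of $\qa$ and $\qb$ with the existence of a real number $\lambda$ satisfying $\ima=\lambda(\imb)$; the extra data about real parts and norms will then force $\lambda\in\{1,-1\}$, which is exactly the asserted dichotomy.

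For the ``only if'' direction, assume $\qa\qb=\qb\qa$. Since $\qa$ and $\qb$ are nonreal, Lemma~\ref{lemma:AXIS_PARALLEL_QUATERNIONS} provides a real $\lambda$ with $\ima=\lambda(\imb)$, so that $\Abs{\ima}=\Abs{\lambda}\,\Abs{\imb}$. On the other hand, from $\Abs{\qa}^{2}=(\rea)^{2}+\Abs{\ima}^{2}$, the same identity for $\qb$, and the hypotheses $\rea=\reb$ and $\Abs{\qa}=\Abs{\qb}$, I obtain $\Abs{\ima}=\Abs{\imb}$. Because $\qb$ is nonreal, $\Abs{\imb}\neq 0$, hence $\Abs{\lambda}=1$, i.e.\ $\lambda=1$ or $\lambda=-1$; these give $\ima=\imb$ and $\ima=-\imb$ respectively.

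For the ``if'' direction, if $\ima=\imb$ then $\rea=\reb$ yields $\qa=\qb$ and $\qa\qb=\qb\qa$ is immediate. If $\ima=-\imb$, then $\qb=\rea-\ima=\qa[][*]$, so $\qa\qb=\qa\qa[][*]=\Abs{\qa}^{2}=\qa[][*]\qa=\qb\qa$; alternatively, one may simply invoke Lemma~\ref{lemma:AXIS_PARALLEL_QUATERNIONS} with $\lambda=-1$. I do not expect a genuine obstacle here: the only step needing a moment's care is the norm bookkeeping in the forward direction, where equality of norms together with equality of real parts is precisely what excludes every scaling factor other than $\pm 1$.
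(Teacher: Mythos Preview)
Your proof is correct and follows essentially the same route as the paper: invoke Lemma~\ref{lemma:AXIS_PARALLEL_QUATERNIONS} to obtain $\ima=\lambda(\imb)$, then use $\rea=\reb$ together with $\Abs{\qa}=\Abs{\qb}$ to force $\lambda^{2}=1$. The only cosmetic difference is that the paper packages both directions into a single biconditional chain and writes the norm computation via quaternion squares $(\imb)^{2}=-\Abs{\imb}^{2}$, whereas you split the two implications and work directly with norms.
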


\begin{proof}
Given $(\rea)^2 - (\reb)^2 = 0$ and Lemma~\ref{lemma:AXIS_PARALLEL_QUATERNIONS} 
it holds $\qa\qb = \qb\qa$ if and only if there exists a real number $\lambda$
such that
\begin{align*}
  0 = \Abs{\qa}^2 - \Abs{\qb}^2 = (\imb)^2 - (\ima)^2 = (\imb)^2 - (\lambda \imb)^2 = (1-\lambda^2)(\imb)^2.
\end{align*}
Hence, $\qa\qb = \qb\qa$ holds if and only if $\lambda = \pm1$, which completes the proof.
\end{proof}

\begin{lemma}
\label{lemma:SUM_OF_QUATERNION_CROSS_PRODUCTS}
For pure quaternions $\qa, \qb$ with $\Abs{\qa} = \Abs{\qb}$ it holds
\begin{align}
  \label{eq:QUATERNION_CROSS_PRODUCT}
  \qa(\qa\times\qb) + (\qa\times\qb)\qb = \big(\qa\cdot(\qb - \qa)\big)(\qa + \qb),
\end{align}
which equals zero if and only if $\qa\qb = \qb\qa$.
\end{lemma}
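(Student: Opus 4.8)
The plan is to compute the left-hand side of \eqref{eq:QUATERNION_CROSS_PRODUCT} directly from the pure-quaternion multiplication rule and the vector triple product identity recorded above, and then to read the vanishing criterion off the resulting factored expression. Put $\qp = \qa\times\qb$, which is again a pure quaternion and is orthogonal to both factors, so that $\qa\cdot\qp = \qb\cdot\qp = 0$. Applying $\qx\qy = -\qx\cdot\qy + \qx\times\qy$ to the pure quaternions involved, the scalar parts drop out and one gets $\qa\qp = \qa\times(\qa\times\qb)$ and $\qp\qb = (\qa\times\qb)\times\qb = -\,\qb\times(\qa\times\qb)$. Next I would invoke $\qc\times(\qa\times\qb) = (\qc\cdot\qb)\qa - (\qc\cdot\qa)\qb$ with $\qc = \qa$ and with $\qc = \qb$, together with $\qx\cdot\qx = \Abs{\qx}^2$ for pure $\qx$, to obtain $\qa\times(\qa\times\qb) = (\qa\cdot\qb)\qa - \Abs{\qa}^2\qb$ and $\qb\times(\qa\times\qb) = \Abs{\qb}^2\qa - (\qa\cdot\qb)\qb$. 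Adding these contributions gives $\qa(\qa\times\qb) + (\qa\times\qb)\qb = (\qa\cdot\qb)(\qa+\qb) - \Abs{\qa}^2\qb - \Abs{\qb}^2\qa$, and the hypothesis $\Abs{\qa} = \Abs{\qb}$ collapses $\Abs{\qa}^2\qb + \Abs{\qb}^2\qa$ into $\Abs{\qa}^2(\qa+\qb)$; factoring out $\qa+\qb$ and writing $\Abs{\qa}^2 = \qa\cdot\qa$ yields exactly $\big(\qa\cdot(\qb-\qa)\big)(\qa+\qb)$, which is \eqref{eq:QUATERNION_CROSS_PRODUCT}.

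For the final assertion I would use $\Abs{\qa} = \Abs{\qb}$ once more to rewrite the scalar coefficient: $2\,\qa\cdot(\qb-\qa) = 2\,\qa\cdot\qb - \Abs{\qa}^2 - \Abs{\qb}^2 = -\Abs{\qa-\qb}^2$, so the right-hand side of \eqref{eq:QUATERNION_CROSS_PRODUCT} equals $-\tfrac12\Abs{\qa-\qb}^2(\qa+\qb)$. This vanishes exactly when $\qa = \qb$ or $\qa = -\qb$. If $\qa = \qb = 0$ then $\qa\qb = \qb\qa$ trivially; otherwise $\qa$ and $\qb$ are nonzero pure quaternions, hence nonreal with equal real parts $0$ and equal norms, and Lemma~\ref{lemma:AXIS_PARALLEL_SAME_NORM_QUATERNIONS} states precisely that $\qa\qb = \qb\qa$ if and only if $\ima = \imb$ or $\ima = -\imb$, i.e.\ if and only if $\qa = \pm\qb$. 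Combining the two observations, \eqref{eq:QUATERNION_CROSS_PRODUCT} is zero if and only if $\qa\qb = \qb\qa$, as claimed.

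I do not anticipate a genuine obstacle: once the orthogonality relations $\qa\cdot(\qa\times\qb) = \qb\cdot(\qa\times\qb) = 0$ are noted, the identity reduces to a mechanical application of the triple product formula. The two points needing a little care are the sign bookkeeping in $(\qa\times\qb)\times\qb = -\,\qb\times(\qa\times\qb)$, and checking that the polarization step genuinely produces the nonpositive coefficient $-\tfrac12\Abs{\qa-\qb}^2$, so that $\qa\cdot(\qb-\qa) = 0$ forces $\qa = \qb$ rather than merely some weaker linear relation; this is where the hypothesis $\Abs{\qa} = \Abs{\qb}$ does the essential work.
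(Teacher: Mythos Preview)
Your proof is correct and follows essentially the same route as the paper: both kill the scalar parts via the orthogonality $\qa\cdot(\qa\times\qb)=\qb\cdot(\qa\times\qb)=0$, expand using the vector triple product identity, factor with $\Abs{\qa}=\Abs{\qb}$, and then invoke Lemma~\ref{lemma:AXIS_PARALLEL_SAME_NORM_QUATERNIONS} for the vanishing criterion. Your polarization step $2\,\qa\cdot(\qb-\qa)=-\Abs{\qa-\qb}^2$ makes explicit why $\qa\cdot(\qb-\qa)=0$ forces $\qa=\qb$, a point the paper leaves implicit.
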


\begin{proof}
Since $\qa \cdot (\qa\times\qb) = 0$ and $\qa\cdot\qa = \Abs{\qa}^2 = \Abs{\qb}^2 = \qb\cdot\qb$ it holds 
\begin{align*}
  \qa (\qa\times\qb) + (\qa\times\qb)\qb &= \qa \times (\qa\times\qb) - \qb \times (\qa\times\qb)\\
  &= (\qa\cdot\qb)\qa - (\qa\cdot\qa)\qb - (\qb\cdot\qb)\qa + (\qb\cdot\qa)\qb\\
  &= \big((\qa\cdot\qb) - (\qa\cdot\qa)\big) (\qa + \qb)\\
  &= \big(\qa\cdot(\qb - \qa)\big) (\qa + \qb),
\end{align*}
Further, by Lemma~\ref{lemma:AXIS_PARALLEL_SAME_NORM_QUATERNIONS} we have 
$\qa\qb = \qb\qa$ if and only if either $\qa = \qb$ or $\qa = -\qb$ and hence
if and only if \eqref{eq:QUATERNION_CROSS_PRODUCT} equals zero.
\end{proof}

\begin{lemma}\label{lemma:AXIS_PERPENDICULAR_QUATERNIONS}
For two nonreal quaternions $\qa, \qb$ it holds $\qa\qb = -\qb\qa$ if and only if
$\rea = \reb = 0$ and $\ima\cdot\imb=0$.
\end{lemma}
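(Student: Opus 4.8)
The plan is to split $\qa\qb=-\qb\qa$ into its real and imaginary parts using the product formula, mirroring the strategy of Lemma~\ref{lemma:AXIS_PARALLEL_QUATERNIONS}. Writing $\qa = \rea + \ima$ and $\qb = \reb + \imb$, the condition $\qa\qb + \qb\qa = 0$ becomes, after expanding both products with the given formula and noting that the cross-product terms cancel in the sum,
\begin{align*}
  \qa\qb + \qb\qa = 2\,(\rea)(\reb) - 2\,\ima\cdot\imb + 2\,(\rea)(\imb) + 2\,(\reb)(\ima).
\end{align*}
Setting this to zero forces the real part $(\rea)(\reb) - \ima\cdot\imb = 0$ and the imaginary part $(\rea)(\imb) + (\reb)(\ima) = 0$ to vanish separately.

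For the ``if'' direction the computation is immediate: if $\rea = \reb = 0$ and $\ima\cdot\imb = 0$, then all four terms above are zero, so $\qa\qb = -\qb\qa$.

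For the ``only if'' direction I would argue from the imaginary-part equation $(\rea)(\imb) = -(\reb)(\ima)$. Suppose toward a contradiction that $\rea \neq 0$. Since $\qb$ is nonreal, $\imb \neq 0$; comparing norms gives $\Abs{\rea}\,\Abs{\imb} = \Abs{\reb}\,\Abs{\ima}$, so $\reb \neq 0$ as well, and the equation shows $\imb$ is a (nonzero real) scalar multiple of $\ima$, say $\imb = \mu\,\ima$ with $\mu = -\reb/\rea$. Substituting into the real-part equation $(\rea)(\reb) = \ima\cdot\imb = \mu\,\Abs{\ima}^2 = -(\reb/\rea)\Abs{\ima}^2$ yields $(\rea)^2(\reb) = -(\reb)\Abs{\ima}^2$, and dividing by $\reb \neq 0$ gives $(\rea)^2 = -\Abs{\ima}^2 \le 0$, forcing $\rea = 0$, a contradiction. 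Hence $\rea = 0$, and by the symmetric argument (or directly from $(\reb)(\ima) = 0$ with $\ima \neq 0$) also $\reb = 0$; the real-part equation then collapses to $\ima\cdot\imb = 0$, completing the proof.

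The only mildly delicate point is handling the case analysis in the ``only if'' direction cleanly — one must use the nonreality of both $\qa$ and $\qb$ (so that $\ima, \imb \neq 0$) to rule out degenerate solutions of the two scalar equations, rather than the conclusion following from a single line of algebra. Everything else is a direct expansion of the quaternion product formula already recorded in the preliminaries.
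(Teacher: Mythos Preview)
Your proof is correct and follows the same overall strategy as the paper: expand $\qa\qb+\qb\qa$ via the product formula, note that the cross-product terms cancel, and then treat the real and imaginary parts separately. The ``if'' direction is handled identically.

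The one place where your argument diverges is the ``only if'' step. The paper observes that $\qa\qb=-\qb\qa$ with $\qa,\qb$ nonzero forces $\qa\qb\neq\qb\qa$ (otherwise $\qa\qb=0$ in a division algebra), so by Lemma~\ref{lemma:AXIS_PARALLEL_QUATERNIONS} the vectors $\ima,\imb$ are linearly independent; the equation $(\rea)(\imb)+(\reb)(\ima)=0$ then immediately yields $\rea=\reb=0$. You instead run a direct contradiction: assume $\rea\neq0$, deduce proportionality of $\ima,\imb$, feed this into the real-part equation, and arrive at $(\rea)^2=-\Abs{\ima}^2$. Both arguments are sound; the paper's is a line shorter by reusing the earlier lemma, while yours is self-contained.
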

\begin{proof}
First assume that $\qa\qb = -\qb\qa$. Then we have $0 = \qa\qb + \qb\qa$ and in particular
	\begin{align*}
		0 = \Im(\qa\qb) + \Im(\qb\qa) = 2[(\rea)(\imb) + (\reb)(\ima)].
	\end{align*}
This implies $\rea=\reb=0$, since $\qa\qb\neq\qb\qa$ and hence by Lemma~\ref{lemma:AXIS_PARALLEL_QUATERNIONS} $\ima\neq\lambda(\imb)$ for all real scalars $\lambda$. Furthermore, we have
	\begin{align*}
		0=\Re(\qa\qb)+\Re(\qb\qa) = 2[(\rea)(\reb) - \ima\cdot\imb] = -2(\ima\cdot\imb).
	\end{align*}
Conversely, assume $\rea= \reb=0$ and $\ima\cdot\imb=0$. Then
	\begin{align*}
		\qa\qb + \qb\qa = \Im(\qa\qb) + \Im(\qb\qa) = \ima\times\imb + \imb\times\ima = 0,
	\end{align*}
which yields the assertion of the lemma.
\end{proof}

\begin{definition}\label{definition:SIMILAR_QUATERNIONS}
Two nonreal quaternions $\qa,\qb$ are called similar if there exists a nonzero
quaternion $\qp$ such that $\qp[][-1]\qa\qp = \qb$. In that case we write
$\qa \sim \qb$.
\end{definition}

\begin{remark}
In Definition \ref{definition:SIMILAR_QUATERNIONS}, similarity was restricted to nonreal quaternions. However, the concept extends naturally to the real case. For real similar quaternions $\qa,\qb$ it holds $\qa = \Re\RBrace{\qa\qp\qp[][-1]} = \Re\RBrace{\qp[][-1]\qa\qp} = \qb$ and further
	\begin{align*}
		\qp[][-1]\qa\qp = \qb \quad \Longleftrightarrow \quad \qa\qp - \qb\qp = 0 \quad \Longleftrightarrow \quad (\qa - \qb)\qp = 0,
	\end{align*}
%
such that every nonzero quaternion $\qp$ possesses this property. This implies that in the real case, every quaternion is a solution to the homogeneous singular Sylvester equation. Conversely, the inhomogeneous singular Sylvester equation has no solutions, as will be shown later.
\end{remark}

\begin{lemma}\label{lemma:SIMILAR_QUATERNIONS_CHARACTERIZATION}
Two nonreal quaternions $\qa,\qb$ are similar if and only if $\rea = \reb$ and
$\Abs{\qa} = \Abs{\qb}$.
\end{lemma}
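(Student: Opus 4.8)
The plan is to prove both directions by separating the scalar and vector parts of a quaternion and exploiting that conjugation by a unit quaternion is an isometry of the imaginary space $\mathbb{R}^3$ that fixes the real axis. For the forward direction, assume $\qp[][-1]\qa\qp = \qb$ with $\qp \neq 0$. Since $\Re(\qx\qy) = \Re(\qy\qx)$, I would write $\reb = \Re(\qp[][-1]\qa\qp) = \Re(\qa\qp\qp[][-1]) = \Re(\qa) = \rea$, and since the norm is multiplicative, $\Abs{\qb} = \Abs{\qp[][-1]}\Abs{\qa}\Abs{\qp} = \Abs{\qa}$. That settles necessity with essentially no calculation.

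The substantive direction is sufficiency: given $\rea = \reb$ and $\Abs{\qa} = \Abs{\qb}$, I must construct a nonzero $\qp$ with $\qp[][-1]\qa\qp = \qb$. Writing $\qa = \rea + \ima$ and $\qb = \rea + \imb$, the hypotheses say $\Abs{\ima} = \Abs{\imb} =: r > 0$ (the case $r = 0$ would force $\qa,\qb$ real, excluded). Because conjugation by $\qp$ preserves the real part automatically, it suffices to find $\qp$ with $\qp[][-1](\ima)\qp = \imb$, i.e. a rotation of $\mathbb{R}^3$ carrying $\ima$ to $\imb$; since both have the same length, such a rotation exists, and every rotation of $\mathbb{R}^3$ is realized by conjugation with a unit quaternion. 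To make this concrete and self-contained I would give an explicit $\qp$: if $\ima$ and $\imb$ are not antiparallel, take $\qp = r^2 + \imb\,\ima$ — equivalently one can normalize $\hat a = \ima/r$, $\hat b = \imb/r$ and set $\qp = 1 + \hat b \hat a$ — and verify $\qp\,\ima = \imb\,\qp$ by a direct computation using the pure-quaternion product rule $\qx\qy = -\qx\cdot\qy + \qx\times\qy$ and the vector triple-product identity quoted in the preamble; if $\ima = -\imb$, pick any pure unit quaternion $\qp$ orthogonal to $\ima$, for which $\qp\,\ima = -\ima\,\qp = \imb\,\qp$ by Lemma~\ref{lemma:AXIS_PERPENDICULAR_QUATERNIONS}.

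The one technical point to watch is that $\qp = r^2 + \imb\ima$ is nonzero and invertible: its real part is $r^2 - \imb\cdot\ima > -r^2$ wait, more carefully $\Re(\qp) = r^2 - \imb\cdot\ima \geq r^2 - r^2 = 0$ wait that's wrong in the antiparallel case — precisely when $\imb = -\ima$ do we get $\Re(\qp)=0$ and $\Im(\qp) = \imb\times\ima = 0$, i.e. $\qp = 0$, which is exactly the excluded case handled separately; otherwise $\Abs{\qp}^2 = \Re(\qp)^2 + \Abs{\imb\times\ima}^2 > 0$, so $\qp$ is invertible in the division algebra. I expect the main obstacle to be purely bookkeeping: carrying out the identity $\qp\,\ima = \imb\,\qp$ cleanly, which amounts to checking $(\imb\ima)\ima = \imb(\ima\ima)$ — immediate by associativity — together with $r^2\,\ima = r^2\,\ima$, so in fact $\qp\ima = r^2\ima + \imb\ima\ima = r^2\ima - r^2\imb\cdot\ldots$; the cleanest route is simply to observe $(r^2 + \imb\ima)\ima = r^2\ima + \imb(\ima)^2 = r^2\ima - r^2\imb = -r^2(\imb - \ima)$ hmm, let me instead just present it as: $\qp\,\ima = (r^2 + \imb\ima)\ima$ and $\imb\,\qp = \imb(r^2 + \imb\ima) = r^2\imb + (\imb)^2\ima = r^2\imb - r^2\ima$, and likewise $\qp\ima = r^2\ima + \imb(\ima)^2 = r^2\ima - r^2\imb$, so the two are equal iff $r^2\ima - r^2\imb = r^2\imb - r^2\ima$, which is false — so the correct normalization is $\qp\,\imb = \ima\,\qp$ giving $\qp[][-1]\ima\qp$; I would double-check the side conventions against Definition~\ref{definition:SIMILAR_QUATERNIONS} before committing, but the structural argument via rotations guarantees such a $\qp$ exists regardless of which explicit formula ends up being correct.
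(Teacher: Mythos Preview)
Your forward direction is correct and identical to the paper's. Your handling of the degenerate case $\ima=-\imb$ via a pure quaternion orthogonal to $\ima$ is also correct and close in spirit to the paper's Case~2b.

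Where your proposal falters is the explicit construction in the generic case. The definition requires $\qa\qp=\qp\qb$, i.e.\ $(\ima)\qp=\qp(\imb)$; your candidate $\qp=r^{2}+(\imb)(\ima)$ does \emph{not} satisfy this in general. A direct computation gives
\[
(\ima)\qp-\qp(\imb)=2\,(\ima\cdot\imb)\,(\imb-\ima),
\]
which vanishes only when $\ima\perp\imb$ or $\ima=\imb$. Your own scratch work already shows the two sides coming out as negatives of each other when you (inadvertently) checked the wrong pair $\qp\,\ima$ versus $\imb\,\qp$, and swapping sides does not rescue the formula. Falling back on ``every rotation of $\mathbb{R}^{3}$ is a quaternion conjugation'' is a legitimate existence argument, but it imports the surjection of unit quaternions onto $\mathrm{SO}(3)$, which the paper neither states nor needs, so as written your sufficiency proof is not self-contained.

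The paper's construction is far simpler: in the generic case it takes $\qp=\ima+\imb$, and then
\[
(\ima)\qp-\qp(\imb)=(\ima)^{2}+(\ima)(\imb)-(\ima)(\imb)-(\imb)^{2}=\Abs{\imb}^{2}-\Abs{\ima}^{2}=0
\]
in one line, with no cross products or triple-product identities. This $\qp$ is nonzero precisely when $\ima\neq-\imb$, so only the antiparallel case requires separate treatment. The half-angle rotation quaternion you were reaching for is unnecessary here; the bare sum of the two imaginary parts already does the job.
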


\begin{proof}
First assume that $\qa \sim \qb$. Then, there exists a nonzero quaternion $\qp$
such that $\rea = \Re\RBrace{\qa\qp\qp[][-1]} = \Re\RBrace{\qp[][-1]\qa\qp} = \reb$.
Moreover, $\qa\qp = \qp\qb$ such that
\begin{align*}
  \Abs{\qa}\Abs{\qp} = \Abs{\qa\qp} = \Abs{\qp\qb} = \Abs{\qp}\Abs{\qb}
\end{align*}
and hence $\Abs{\qa} = \Abs{\qb}$. Conversely, assume that $\rea = \reb$ and 
$\Abs{\qa} = \Abs{\qb}$. Then, it holds $\rea - \reb = 0$ and 
$\Abs{\qb}^2 - \Abs{\qa}^2 = 0$. Furthermore, we have
\begin{align}
  \qa\qp - \qp\qb = (\rea - \reb)\qp + (\ima)\qp - \qp(\imb) = (\ima)\qp - \qp(\imb).
  \label{eq:SIMILAR_QUATERNIONS_CHARACTERIZATION_PROOF_A}
\end{align}
Now, consider the following cases in order to find a proper nonzero quaternion
$\qp$. Case~1: Let $\qa\qb \ne \qb\qa$. Then, by 
Lemma~\ref{lemma:AXIS_PARALLEL_QUATERNIONS} it holds $\ima \ne \lambda(\imb)$
for all real numbers $\lambda$. Hence, $\qp = \ima + \imb$ is nonzero.
Case~2: Let $\qa\qb = \qb\qa$. Then, by 
Lemma~\ref{lemma:AXIS_PARALLEL_SAME_NORM_QUATERNIONS} it holds either
$\ima = \imb$ or $\ima = -\imb$.
Case~2a: Let $\ima = \imb$. Then, 
$\qp = \ima + \imb = 2\,\imb$ is nonzero. 
Case~2b: Let $\ima = -\imb$. Since $\ima + \imb = 0$, 
choose an arbitrary nonreal quaternion $\qq$ with $(\ima)\qq \ne \qq(\ima)$.
Then, $\qp = (\ima)\qq + \qq(\imb) = (\ima)\qq - \qq(\ima)$ is nonzero.
Given the nonzero quaternions $\qp$, for the Cases 1 and 2a we have
\begin{align}
  (\ima)\qp - \qp(\imb) = (\ima)^2 - (\imb)^2 = \Abs{\qb}^2 - \Abs{\qa}^2 = 0
  \label{eq:SIMILAR_QUATERNIONS_CHARACTERIZATION_PROOF_B}
\end{align}
and for Case 2b further
\begin{align}
  (\ima)\qp - \qp(\imb) = (\ima)^2\qq - \qq(\imb)^2 = (\Abs{\qb}^2 - \Abs{\qa}^2)\qq = 0.
  \label{eq:SIMILAR_QUATERNIONS_CHARACTERIZATION_PROOF_C}
\end{align}
Finally, combining \eqref{eq:SIMILAR_QUATERNIONS_CHARACTERIZATION_PROOF_A},
\eqref{eq:SIMILAR_QUATERNIONS_CHARACTERIZATION_PROOF_B} as well as
\eqref{eq:SIMILAR_QUATERNIONS_CHARACTERIZATION_PROOF_A},
\eqref{eq:SIMILAR_QUATERNIONS_CHARACTERIZATION_PROOF_C} yields
$\qa\qp = \qp\qb$ with nonzero $\qp$, which completes the proof.
\end{proof}

\begin{corollary}\label{corollary:PURE_SIMILAR_QUATERNIONS_CHARACTERIZATION}
For pure quaternions $\qa,\qb$ it holds $\qa \sim \qb$ if and only if
$\Abs{\qa} = \Abs{\qb}$ and hence if and only if $\qa \sim -\qb$.
\end{corollary}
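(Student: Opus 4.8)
The plan is to obtain the statement as an immediate specialization of Lemma~\ref{lemma:SIMILAR_QUATERNIONS_CHARACTERIZATION}. Since $\qa$ and $\qb$ are pure, one has $\rea = 0 = \reb$, so the condition $\rea = \reb$ appearing in that lemma is satisfied automatically; the only surviving requirement for $\qa \sim \qb$ is therefore $\Abs{\qa} = \Abs{\qb}$, which is precisely the first claimed equivalence. A short remark handles the degenerate case: if either $\qa$ or $\qb$ is the zero quaternion, then from $\qp[][-1]\qa\qp = \qb$ the other is zero as well and both norms vanish, so the equivalence is trivial; if both are nonzero they are nonreal and Lemma~\ref{lemma:SIMILAR_QUATERNIONS_CHARACTERIZATION} applies verbatim.

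For the second equivalence I would simply feed the pair $(\qa, -\qb)$ into the first one. The quaternion $-\qb$ is again pure, with $\Abs{-\qb} = \Abs{\qb}$, so $\qa \sim -\qb$ holds if and only if $\Abs{\qa} = \Abs{-\qb} = \Abs{\qb}$, which is exactly the same condition that characterizes $\qa \sim \qb$. Chaining the two statements yields $\qa \sim \qb \Longleftrightarrow \Abs{\qa} = \Abs{\qb} \Longleftrightarrow \qa \sim -\qb$, as asserted.

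I do not anticipate any genuine obstacle here, since the result is a direct consequence of an already-established lemma. The only point deserving a sentence of care is that Definition~\ref{definition:SIMILAR_QUATERNIONS} and Lemma~\ref{lemma:SIMILAR_QUATERNIONS_CHARACTERIZATION} are phrased for nonreal quaternions, so the zero-quaternion case must be mentioned explicitly (or the corollary read as tacitly concerning nonzero pure quaternions); this is a triviality rather than a real difficulty.
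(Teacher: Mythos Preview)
Your proposal is correct and matches the paper's intent: the paper states this as a corollary with no proof, treating it as an immediate specialization of Lemma~\ref{lemma:SIMILAR_QUATERNIONS_CHARACTERIZATION}, which is exactly what you do. Your extra sentence on the zero-quaternion case is a harmless clarification beyond what the paper bothers to say.
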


\begin{remark}
The result concerning the similarity of the quaternions $\qa$ and $\qb$, as stated in Lemma~\ref{lemma:SIMILAR_QUATERNIONS_CHARACTERIZATION}, is already present in the equivalence between (1) and (5) of Theorem 2.2.6 in \cite{rodman2014_topics}. However, the proof of this theorem does not provide a formula for explicitly determining a quaternion $\qp$ that establishes the similarity between $\qa$ and $\qb$. In contrast, the proof of Lemma~\ref{lemma:SIMILAR_QUATERNIONS_CHARACTERIZATION} explicitly constructs such a $\qp$ by considering cases 1, 2a, and 2b. This distinction is crucial as it enables the explicit determination of the nonzero solutions to the homogeneous singular Sylvester equation in Theorem~\ref{theorem:NONZERO_SOLUTIONS_OF_THE_HOMOGENEOUS_SYLVESTER_EQUATION}.
\end{remark}


\section{Quaternion square roots}\label{sec:square_roots}

When attempting to solve the singular Sylvester equation, constraints resulting
from the noncommutativity of the quaternion product inevitably lead to the idea
that the solutions are somehow related to quaternion square roots. As will be
shown later, these ideas are not unfounded. Square roots of quaternions are
defined analogously to square roots of complex numbers and are the solutions to
the quadratic equation~\eqref{eq:SQRT_DEFINING_EQUATION} where $\qa$ represents
a nonzero quaternion.
\begin{align}
  \qx[][2] - \qa  = 0
  \label{eq:SQRT_DEFINING_EQUATION}
\end{align}

Statements about both the existence and number of such solutions can be found in
\cite{niven1941_equations,niven1942_roots,eilenberg+niven1944_fundamenta,
sakkalis+ko++2019_roots}. According to this, \eqref{eq:SQRT_DEFINING_EQUATION} has
two solutions if and only if either $\qa$ is a nonreal quaternion or a positive
real number. Otherwise, if $\qa$ is a negative real number,
\eqref{eq:SQRT_DEFINING_EQUATION} has infinitely many solutions. The 
representation of these solutions will be derived next.

\begin{definition}
Let $\qa$ be a nonzero quaternion. The solutions $\qx$ of
\eqref{eq:SQRT_DEFINING_EQUATION} are called quaternion square root of $\qa$ and
are denoted as $\textstyle\sqrt{a}$.
\end{definition}

\begin{lemma}
\label{lemma:QUATERNION_SQRT}
For a nonzero quaternion $\qa$ \eqref{eq:SQRT_DEFINING_EQUATION} has the
solutions
\begin{align}
  \qx = \pm \sqrt{\Abs{\qa}} \, \frac{\qp}{\Abs{\qp}},
  \label{eq:QUATERNION_SQRT}
\end{align}
where $\qp$ is either an arbitrary nonzero pure quaternion if $\qa$ is a negative real
number, or $\qp = \qa + \Abs{\qa}$ if $\qa$ is a nonreal quaternion or a positive
real number.
\end{lemma}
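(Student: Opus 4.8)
The plan is to substitute the candidate expressions from \eqref{eq:QUATERNION_SQRT} directly into \eqref{eq:SQRT_DEFINING_EQUATION} and verify equality, treating the two cases (negative real $\qa$ versus nonreal or positive real $\qa$) separately, and then to argue that no further solutions exist. The one algebraic fact I would lean on throughout is the quadratic relation satisfied by every quaternion: from $\qa[][*] = 2\,\rea - \qa$ and $\qa\qa[][*] = \Abs{\qa}^2$ one gets $\qa[][2] = 2\,(\rea)\,\qa - \Abs{\qa}^2$, whose scalar and vector parts are $\Re(\qa[][2]) = 2\,(\rea)^2 - \Abs{\qa}^2$ and $\Im(\qa[][2]) = 2\,(\rea)\,\ima$. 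I would also use that a pure quaternion $\qp$ satisfies $\qp[][2] = -\Abs{\qp}^2$, and that real scalars are central, so that for $\qx$ as in \eqref{eq:QUATERNION_SQRT} one simply has $\qx[][2] = \Abs{\qa}\,\qp[][2]/\Abs{\qp}^2$.

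For existence I would first take $\qa$ a negative real number, so $\Abs{\qa} = -\qa$; for any pure $\qp$ the displayed $\qx$ then gives $\qx[][2] = \Abs{\qa}\,(-\Abs{\qp}^2)/\Abs{\qp}^2 = -\Abs{\qa} = \qa$. For $\qa$ nonreal or a positive real I would set $\qp = \qa + \Abs{\qa}$ and first confirm $\qp \ne 0$: its vector part is $\ima$ and its scalar part is $\rea + \Abs{\qa}$, which is strictly positive, since $\Abs{\qa} = \rea$ when $\qa$ is a positive real and $\Abs{\qa} > \sqrt{(\rea)^2} \ge -\rea$ when $\qa$ is nonreal (because then $\Abs{\ima} \ne 0$). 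Consequently $\Abs{\qp}^2 = \qp\qp[][*] = 2\,\Abs{\qa}\,(\rea + \Abs{\qa}) \ne 0$, while the quadratic relation gives $\qp[][2] = \qa[][2] + 2\,\Abs{\qa}\,\qa + \Abs{\qa}^2 = 2\,(\rea + \Abs{\qa})\,\qa$; cancelling the common scalar factor $2\,\Abs{\qa}\,(\rea + \Abs{\qa})$ yields $\qx[][2] = \Abs{\qa}\,\qp[][2]/\Abs{\qp}^2 = \qa$.

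For completeness I would start from an arbitrary solution $\qx$ of $\qx[][2] = \qa$, take norms to get $\Abs{\qx} = \sqrt{\Abs{\qa}}$, and compare vector parts to get $2\,(\rex)\,\imx = \ima$. If $\qa$ is nonreal this forces $\rex \ne 0$, hence $\imx = \ima/(2\,\rex)$, while the scalar part gives $(\rex)^2 = \tfrac12\big(\rea + \Abs{\qa}\big)$, a single positive number; this leaves exactly two quaternions, negatives of one another, and a short check identifies them with \eqref{eq:QUATERNION_SQRT}. If $\qa$ is real, $2\,(\rex)\,\imx = 0$ forces $\qx$ to be real — giving $\qx = \pm\sqrt{\qa}$ when $\qa > 0$, which is \eqref{eq:QUATERNION_SQRT} since then $\qp = 2\qa$ — or pure — giving $\Abs{\qx} = \sqrt{\Abs{\qa}}$ when $\qa < 0$, which is \eqref{eq:QUATERNION_SQRT} with $\qp$ ranging over all pure quaternions.

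The computation has no serious difficulty; the step that needs the most care, and the closest thing to an obstacle, is the case split itself, and within it the verification that $\qp \ne 0$ and $\Abs{\qp} \ne 0$ for nonreal or positive real $\qa$. That is precisely the place where the exclusion of a negative real $\qa$ is used — equivalently, where $\rea + \Abs{\qa} > 0$ is needed so that the cancellation is legitimate. Everything else reduces to the quadratic relation above and the identity $\qp[][2] = -\Abs{\qp}^2$ for pure $\qp$.
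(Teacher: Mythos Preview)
Your argument is correct and rests on the same algebraic facts as the paper's proof: the case split into negative real versus nonreal/positive real, the identity $2(\rex)\qx = \qx[][2] + \Abs{\qx}^2 = \qa + \Abs{\qa}$ (equivalently, the quadratic relation $\qa[][2] = 2(\rea)\qa - \Abs{\qa}^2$), and the observation that $\rex \ne 0$ forces the two isolated roots in the nonreal case. The organization differs slightly: the paper \emph{derives} \eqref{eq:QUATERNION_SQRT} from \eqref{eq:SQRT_DEFINING_EQUATION} in one pass (so existence is implicit in the reversibility of the steps), whereas you first \emph{verify} \eqref{eq:QUATERNION_SQRT} by direct substitution---computing $\qp[][2] = 2(\rea + \Abs{\qa})\qa$ and $\Abs{\qp}^2 = 2\Abs{\qa}(\rea + \Abs{\qa})$ and cancelling---and then run a separate completeness argument that mirrors the paper's derivation. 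Your version is marginally more explicit about why no solutions are missed, at the cost of doing the nonreal case twice; the paper's is shorter but tacitly relies on the derivation being reversible.
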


\begin{proof}
 Given \eqref{eq:SQRT_DEFINING_EQUATION} we have $\qx\qx = \qa$ such that
$\Abs{\qx}^2 = \Abs{\qa}$ and $\Abs{\qx} = \sqrt{\Abs{\qa}}$. Now assume that
$\qa$ is nonreal. Then, 
\begin{align}
  2(\rex)\qx = \qx\qx + \qx[][*]\qx = \qx\qx + \Abs{\qx}^2 = \qa + \Abs{\qa}.
  \label{eq:SQRT_RESULTING_EQUATION}
\end{align}
Assuming $\rex = 0$ we have $0 = \qa + \Abs{\qa}$ which contradicts that $\qa$
is nonreal. Hence, we must have $\rex \neq 0$ and further
\begin{align}
  \Abs{2\rex} = \frac{\Abs[\big]{\qa + \Abs{\qa}}}{\Abs{\qx}} 
  \quad\Longleftrightarrow\quad 
  2\rex = \pm\frac{\Abs[\big]{\qa + \Abs{\qa}}}{\sqrt{\Abs{\qa}}},
  \label{eq:REAL_PART_OF_QUATERNION_SQRT}
\end{align}
such that substituting \eqref{eq:REAL_PART_OF_QUATERNION_SQRT} into 
\eqref{eq:SQRT_RESULTING_EQUATION} yields \eqref{eq:QUATERNION_SQRT}.
Next, assume that $\qa$ is a real number. Then, from 
\eqref{eq:SQRT_DEFINING_EQUATION} we have $0 = \Im(\qx\qx)$ and $a = \Re(\qx\qx)$
as well as
\begin{align*}
  0 = \Im(\qx\qx) = 2(\rex)(\imx) \quad\Longleftrightarrow\quad \rex=0 \text{~or~} \imx=0.
\end{align*}
Now assume that $\qa$ is positive. If $\rex=0$, then 
$a = \Re(\qx\qx) = -\Abs{\imx}^2$, which contradicts that $\qa$ is a positive
real number. Hence, we must have $\imx = 0$ and further 
$\qx = \rex = \pm\textstyle\sqrt{a}$. Since $\qa=\Abs{\qa}$ it holds 
$\qa + \Abs{\qa}=\Abs[\big]{\qa + \Abs{\qa}}$, which yields \eqref{eq:QUATERNION_SQRT}.
Conversely, assume that $\qa$ is negative. If $\imx=0$, then 
$\qa = \Re(\qx\qx) = \Abs{\rex}^2$\!, which contradicts that $\qa$ is a negative
real number. Hence, we must have $\rex = 0$. Further, for every pure quaternion $\qp$
it holds $\qp\qp = -\Abs{\qp}^2$ such that
\begin{align*}
  \qx\qx = -\Abs{\qx}^2 = - \Abs{\qa} = \frac{\qp\qp}{\Abs{\qp}^2} \Abs{\qa},
\end{align*}
which yields \eqref{eq:QUATERNION_SQRT} and completes the proof.
\end{proof}

\begin{corollary}\label{corollary:UNIT_QUATERNION_SQRT}
The square roots of a nonzero quaternion $\qa$ are unit quaternions if and
only if quaternion $\qa$ is unit.
\end{corollary}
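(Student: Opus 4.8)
The plan is to extract the equivalence directly from the norm identity that already appeared at the start of the proof of Lemma~\ref{lemma:QUATERNION_SQRT}. First I would recall that if $\qx$ is any square root of $\qa$, then $\qx\qx = \qa$, and by multiplicativity of the quaternion norm this gives $\Abs{\qx}^2 = \Abs{\qx\qx} = \Abs{\qa}$, hence $\Abs{\qx} = \sqrt{\Abs{\qa}}$. The crucial observation is that the right-hand side depends only on $\qa$, so \emph{every} square root of $\qa$ has the same norm $\sqrt{\Abs{\qa}}$; in particular "all square roots of $\qa$ are unit" is equivalent to "some square root of $\qa$ is unit", and by Lemma~\ref{lemma:QUATERNION_SQRT} (or the existence statements cited there) square roots always exist.

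With this in hand the corollary is immediate in both directions. If $\qa$ is unit, then $\Abs{\qx} = \sqrt{\Abs{\qa}} = \sqrt{1} = 1$ for each square root $\qx$, so all square roots are unit. Conversely, if the square roots of $\qa$ are unit quaternions, pick one such $\qx$; then $\Abs{\qa} = \Abs{\qx}^2 = 1$, so $\qa$ is unit. I do not expect any genuine obstacle here — the computation is a one-line application of $\Abs{\qx\qy} = \Abs{\qx}\Abs{\qy}$ — and the only point requiring a little care is the quantifier bookkeeping just described, which is handled by noting that the norm of a square root is completely determined by $\Abs{\qa}$.
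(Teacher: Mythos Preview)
Your argument is correct and matches the paper's approach: the corollary is stated without proof as an immediate consequence of Lemma~\ref{lemma:QUATERNION_SQRT}, and the operative fact is precisely the norm identity $\Abs{\qx} = \sqrt{\Abs{\qa}}$ derived at the start of that lemma's proof. Your quantifier remark is a nice clarification but not strictly needed, since this identity already applies to every square root.
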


\begin{corollary}\label{corollary:QUATERNION_SQRT_IS_NOT_PURE}
The square roots of a nonreal quaternion $\qa$ have nonzero real parts, 
i.e.\ the square roots of a nonreal quaternion $\qa$ are not pure.
\end{corollary}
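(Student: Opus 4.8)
The plan is to read off the real part of the square roots directly from the explicit formula in Lemma~\ref{lemma:QUATERNION_SQRT} and show it cannot vanish. Let $\qa$ be a nonreal quaternion. By Lemma~\ref{lemma:QUATERNION_SQRT} every square root of $\qa$ has the form $\qx = \pm\sqrt{\Abs{\qa}}\,\qp/\Abs{\qp}$ with $\qp = \qa + \Abs{\qa}$. Since taking the real part is real-linear and commutes with real scalar multiplication, $\rex = \pm\sqrt{\Abs{\qa}}\,\Re{(\qp)}/\Abs{\qp}$, and $\Re{(\qp)} = \rea + \Abs{\qa}$. As $\sqrt{\Abs{\qa}} > 0$ and $\Abs{\qp} > 0$, it therefore suffices to prove $\rea + \Abs{\qa} \neq 0$.

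For this I would use that $\qa$ is nonreal, i.e.\ $\ima \neq 0$, hence $\Abs{\ima}^2 > 0$ and $\Abs{\qa}^2 = (\rea)^2 + \Abs{\ima}^2 > (\rea)^2 \ge 0$. Taking square roots gives $\Abs{\qa} > \Abs{\rea} \ge -\rea$, so $\rea + \Abs{\qa} > 0$, and in particular $\rex \neq 0$. Thus no square root of $\qa$ is pure.

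Alternatively, one can bypass the formula and reuse the intermediate identity \eqref{eq:SQRT_RESULTING_EQUATION} from the proof of Lemma~\ref{lemma:QUATERNION_SQRT}: for a square root $\qx$ of a nonreal $\qa$ one has $2(\rex)\qx = \qa + \Abs{\qa}$, so $\rex = 0$ would force $\qa = -\Abs{\qa}$, a nonpositive real number, contradicting that $\qa$ is nonreal. Either way the argument is essentially a one-line consequence of Lemma~\ref{lemma:QUATERNION_SQRT}; there is no real obstacle, the only place where non-realness is genuinely used being the strict inequality $\Abs{\qa} > \Abs{\rea}$ (equivalently $\qa + \Abs{\qa} \neq 0$).
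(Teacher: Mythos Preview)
Your proposal is correct. The paper states this as an unproved corollary of Lemma~\ref{lemma:QUATERNION_SQRT}, and the reasoning it relies on is precisely your second approach: in the proof of that lemma, immediately after \eqref{eq:SQRT_RESULTING_EQUATION}, the paper already notes that $\rex = 0$ would force $\qa + \Abs{\qa} = 0$, contradicting nonrealness. Your first approach via the explicit formula is equally valid and just unpacks the same inequality $\Abs{\qa} > \Abs{\rea}$ a bit more explicitly.
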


\begin{corollary}\label{corollary:QUATERNION_SQRT_LINEAR_FORM}
For a nonreal quaternion $\qa$ there exist nonzero real numbers
$\lambda_0, \lambda_1$ such that
\vspace*{-1ex}
\begin{align*}
  \sqrt{\qa} = \pm(\lambda_0 + \lambda_1 \qa).
\end{align*}
In particular, we have
$\lambda_0 = \lambda_1 \Abs{\qa}$ and 
$\lambda_1 = \sqrt{\Abs{\qa}} / \Abs[\big]{\qa + \Abs{\qa}}$.
\end{corollary}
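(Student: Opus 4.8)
The plan is to read the claim off directly from Lemma~\ref{lemma:QUATERNION_SQRT}. Since $\qa$ is nonreal, that lemma gives $\sqrt{\qa} = \pm\sqrt{\Abs{\qa}}\,\qp/\Abs{\qp}$ with the explicit choice $\qp = \qa + \Abs{\qa}$. First I would substitute this $\qp$, noting $\Abs{\qp} = \Abs[\big]{\qa + \Abs{\qa}}$, and expand:
\begin{align*}
  \sqrt{\qa} = \pm\frac{\sqrt{\Abs{\qa}}}{\Abs[\big]{\qa + \Abs{\qa}}}\,\bigl(\qa + \Abs{\qa}\bigr) = \pm\bigl(\lambda_1\Abs{\qa} + \lambda_1\qa\bigr),
\end{align*}
where $\lambda_1 := \sqrt{\Abs{\qa}}\,/\,\Abs[\big]{\qa + \Abs{\qa}}$. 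Setting $\lambda_0 := \lambda_1\Abs{\qa}$ then gives $\sqrt{\qa} = \pm(\lambda_0 + \lambda_1\qa)$ together with the asserted formulas for $\lambda_0$ and $\lambda_1$.

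Next I would check that $\lambda_0, \lambda_1$ are genuinely nonzero real numbers. They are real by construction. Since $\qa \ne 0$, we have $\Abs{\qa} > 0$, so the numerator $\sqrt{\Abs{\qa}}$ of $\lambda_1$ is positive; the denominator $\Abs[\big]{\qa + \Abs{\qa}}$ is nonzero because $\qa + \Abs{\qa} = 0$ would force $\qa = -\Abs{\qa}$, a negative real number, contradicting that $\qa$ is nonreal. Hence $\lambda_1 \ne 0$, and then $\lambda_0 = \lambda_1\Abs{\qa}$ is nonzero as a product of two nonzero reals.

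I expect essentially no obstacle: the statement is a repackaging of the nonreal case of Lemma~\ref{lemma:QUATERNION_SQRT}, and the only point needing a word of justification is the nondegeneracy of $\Abs[\big]{\qa + \Abs{\qa}}$, which is exactly the observation already used in the proof of that lemma when $\rex = 0$ was excluded. Optionally I would add a remark that $\lambda_0/\lambda_1 = \Abs{\qa}$ expresses the familiar fact that $\sqrt{\qa}$ lies in the real plane spanned by $1$ and $\qa$, consistent with Corollary~\ref{corollary:QUATERNION_SQRT_IS_NOT_PURE}.
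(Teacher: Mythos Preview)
Your proposal is correct and matches the paper's approach: the paper states this as an immediate corollary of Lemma~\ref{lemma:QUATERNION_SQRT} with no separate proof, and your derivation is precisely the direct substitution of $\qp = \qa + \Abs{\qa}$ into that lemma's formula, together with the (already observed) nonvanishing of $\qa + \Abs{\qa}$ for nonreal $\qa$.
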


\begin{remark}
A formula for calculating the quaternion square root is already provided in a similar form in Theorem~2.5.2 of \cite{rodman2014_topics}. However, the representation used in Lemma~\ref{lemma:QUATERNION_SQRT} is more suitable for practical applications in engineering systems, as the general form according to \eqref{eq:QUATERNION_SQRT} applies to arbitrary quaternions $\qa$. The only difference lies in the specific representation of quaternion $\qq$, which depends on $\qa$. In \cite{rodman2014_topics} however, completely different representations depending on $\qa$ were provided.
\end{remark}

In order to apply Lemma~\ref{lemma:QUATERNION_SQRT} to find solutions to the
singular Sylvester equation, it is necessary to consider a special case of
\eqref{eq:SQRT_DEFINING_EQUATION}. For arbitrary quaternions $\qa,\qb$, for which the
product $\qa\qb$ is nonzero, the square roots of $\qa\qb$ are the solutions to
\begin{align}
  \qx[][2] - \qa\qb  = 0.
  \label{eq:PRODUCT_SQRT_DEFINING_EQUATION}
\end{align}
If the product $\qa\qb$ is nonreal, then
\eqref{eq:PRODUCT_SQRT_DEFINING_EQUATION} has exactly two solutions. In
addition, if the quaternions $\qa,\qb$ have properties that are necessary for
the Sylvester equation to be singular, the solutions of
\eqref{eq:PRODUCT_SQRT_DEFINING_EQUATION} have a special structure.

\begin{lemma}\label{lemma:QUATERNION_PRODUCT_SQRT}
For arbitrary quaternions $\qa, \qb$ with $\Abs{\qa} = \Abs{\qb}$, for which the
quaternion product $\qa\qb$ is nonreal, \eqref{eq:PRODUCT_SQRT_DEFINING_EQUATION} has the
solutions
\begin{align}
  x = \pm \frac{\qa(\qb + \qa[][*])}{\Abs{\qb + \qa[][*]}} = \pm \frac{(\qa+ \qb[][*])\qb}{\Abs{\qa+ \qb[][*]}}.
  \label{eq:SOLUTION_OF_PRODUCT_SQRT_DEFINING_EQUATION}
\end{align}
\end{lemma}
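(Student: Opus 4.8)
The plan is to reduce directly to Lemma~\ref{lemma:QUATERNION_SQRT} applied to the quaternion $\qa\qb$ and then to simplify the resulting formula using the hypothesis $\Abs{\qa} = \Abs{\qb}$. Since $\qa\qb$ is nonreal it is in particular a nonzero quaternion, so Lemma~\ref{lemma:QUATERNION_SQRT} applies with $\qp = \qa\qb + \Abs{\qa\qb}$ and formula~\eqref{eq:QUATERNION_SQRT} gives the two square roots $\qx = \pm\sqrt{\Abs{\qa\qb}}\,\qp/\Abs{\qp}$; it remains only to rewrite this in the claimed form.

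The decisive step is to factor $\qp$ on either side. Putting $r = \Abs{\qa} = \Abs{\qb}$, we have $\Abs{\qa\qb} = \Abs{\qa}\Abs{\qb} = r^2$ and, crucially, $\qa\qa[][*] = \Abs{\qa}^2 = r^2 = \Abs{\qb}^2 = \qb[][*]\qb$, so that
\begin{align*}
  \qp = \qa\qb + r^2 = \qa\qb + \qa\qa[][*] = \qa\big(\qb + \qa[][*]\big) = \qa\qb + \qb[][*]\qb = \big(\qa + \qb[][*]\big)\qb.
\end{align*}
I would then record that both factors $\qb + \qa[][*]$ and $\qa + \qb[][*]$ are nonzero: were $\qb + \qa[][*] = 0$, then $\qa\qb = -\qa\qa[][*] = -r^2$ would be a negative real number, contradicting that $\qa\qb$ is nonreal, and symmetrically for $\qa + \qb[][*]$. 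Consequently $\qp \neq 0$ and, by multiplicativity of the norm, $\Abs{\qp} = \Abs{\qa}\,\Abs{\qb + \qa[][*]} = r\,\Abs{\qb + \qa[][*]}$ and likewise $\Abs{\qp} = r\,\Abs{\qa + \qb[][*]}$.

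Finally, I would substitute these factorizations into $\qx = \pm\sqrt{\Abs{\qa\qb}}\,\qp/\Abs{\qp}$ and cancel the common factor $\sqrt{\Abs{\qa\qb}} = r$ against $\Abs{\qp} = r\,\Abs{\qb + \qa[][*]}$, respectively $r\,\Abs{\qa + \qb[][*]}$, which yields exactly the two expressions in~\eqref{eq:SOLUTION_OF_PRODUCT_SQRT_DEFINING_EQUATION}. I do not expect a genuine obstacle here; the only subtlety is that the hypothesis $\Abs{\qa} = \Abs{\qb}$ must be invoked precisely at the point where the scalar $\Abs{\qa\qb}$ is identified simultaneously with $\qa\qa[][*]$ and with $\qb[][*]\qb$, since it is this double identification that makes the two one-sided factorizations of $\qp$---and hence the two displayed forms of the square roots---available.
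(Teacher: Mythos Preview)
Your proposal is correct and follows essentially the same route as the paper: apply Lemma~\ref{lemma:QUATERNION_SQRT} to the nonreal quaternion $\qa\qb$ and then use $\Abs{\qa}=\Abs{\qb}$ to rewrite $\Abs{\qa\qb}$ as $\qa\qa[][*]$ (respectively $\qb[][*]\qb$), which gives the two factorizations of $\qp=\qa\qb+\Abs{\qa\qb}$ and hence~\eqref{eq:SOLUTION_OF_PRODUCT_SQRT_DEFINING_EQUATION}. Your extra checks that $\qb+\qa[][*]$ and $\qa+\qb[][*]$ are nonzero and the explicit cancellation of $\sqrt{\Abs{\qa\qb}}=r$ against $\Abs{\qp}=r\,\Abs{\qb+\qa[][*]}$ are details the paper leaves implicit but handles the same way.
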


\begin{proof}
We have $\Abs{\qa\qb} = \Abs{\qa}^2 = \qa\qa[][*]$ such that 
$\sqrt{\Abs{\qa\qb}} = \Abs{\qa}$ and 
$\qa\qb + \Abs{\qa\qb} = \qa(\qb + \qa[][*])$. Then, 
Lemma~\ref{lemma:QUATERNION_SQRT} with $\qa\qb$ instead of $\qa$ yields the
left hand solutions of \eqref{eq:SOLUTION_OF_PRODUCT_SQRT_DEFINING_EQUATION}. 
The right hand solutions of \eqref{eq:SOLUTION_OF_PRODUCT_SQRT_DEFINING_EQUATION}
also follow from Lemma~\ref{lemma:QUATERNION_SQRT}, since
$\Abs{\qa\qb} = \Abs{\qb}^2 = \qb[][*]\qb$ such that 
$\sqrt{\Abs{\qa\qb}} = \Abs{\qb}$ and 
$\qa\qb + \Abs{\qa\qb} = (\qa + \qb[][*])\qb$.
\end{proof}

\begin{corollary}\label{corollary:QUATERNION_PRODUCT_SQRT_LINEAR_FORM}
For two quaternions $\qa, \qb$ with $\Abs{\qa} = \Abs{\qb}$, for which the
quaternion product $\qa\qb$ is nonreal, there exist nonzero real numbers
$\lambda_0, \lambda_1$ such that
\begin{align*}
  \sqrt{\qa\qb} = \pm(\lambda_0 + \lambda_1 \qa\qb).
\end{align*}
In particular, we have
$\lambda_0 = \lambda_1 \Abs{\qa}\Abs{\qb}$ and 
$\lambda_1 = 1 / \Abs{\qa+ \qb[][*]} = 1 / \Abs{\qb + \qa[][*]}$.
\end{corollary}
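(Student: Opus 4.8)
The plan is to deduce the statement directly from the closed form \eqref{eq:SOLUTION_OF_PRODUCT_SQRT_DEFINING_EQUATION} of Lemma~\ref{lemma:QUATERNION_PRODUCT_SQRT}, rather than re-deriving anything from scratch. Starting from $\sqrt{\qa\qb} = \pm\, \qa(\qb + \qa[][*])/\Abs{\qb + \qa[][*]}$, I would expand the numerator using $\qa\qa[][*] = \Abs{\qa}^2$ together with the hypothesis $\Abs{\qa} = \Abs{\qb}$, namely $\qa(\qb + \qa[][*]) = \qa\qb + \qa\qa[][*] = \qa\qb + \Abs{\qa}\Abs{\qb}$. This already exhibits $\sqrt{\qa\qb}$ in the affine form $\pm(\lambda_0 + \lambda_1\qa\qb)$ with $\lambda_1 = 1/\Abs{\qb + \qa[][*]}$ and $\lambda_0 = \Abs{\qa}\Abs{\qb}/\Abs{\qb + \qa[][*]} = \lambda_1\Abs{\qa}\Abs{\qb}$.

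For the second expression of $\lambda_1$, I would repeat the computation with the right-hand form of \eqref{eq:SOLUTION_OF_PRODUCT_SQRT_DEFINING_EQUATION}: $(\qa + \qb[][*])\qb = \qa\qb + \qb[][*]\qb = \qa\qb + \Abs{\qb}^2 = \qa\qb + \Abs{\qa}\Abs{\qb}$, so that $\sqrt{\qa\qb} = \pm(\qa\qb + \Abs{\qa}\Abs{\qb})/\Abs{\qa + \qb[][*]}$ and hence $\lambda_1 = 1/\Abs{\qa + \qb[][*]}$ as well, consistent with the already known identity $\Abs{\qb + \qa[][*]} = \Abs{\qa + \qb[][*]}$. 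Equivalently, one could invoke Corollary~\ref{corollary:QUATERNION_SQRT_LINEAR_FORM} with $\qa\qb$ in place of $\qa$ and simplify using $\sqrt{\Abs{\qa\qb}} = \Abs{\qa} = \Abs{\qb}$ and the factorizations of $\qa\qb + \Abs{\qa\qb}$ from the proof of Lemma~\ref{lemma:QUATERNION_PRODUCT_SQRT}; both routes yield the same constants.

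The only point needing care is that $\lambda_0$ and $\lambda_1$ are genuinely nonzero. Since $\qa\qb$ is assumed nonreal, both $\qa$ and $\qb$ are nonzero, so $\Abs{\qa}\Abs{\qb} > 0$; and if $\qb + \qa[][*] = 0$ then $\qb = -\qa[][*]$, whence $\qa\qb = -\qa\qa[][*] = -\Abs{\qa}^2$ would be real, contradicting the hypothesis, so $\Abs{\qb + \qa[][*]} > 0$ and likewise $\Abs{\qa + \qb[][*]} > 0$. Thus $\lambda_1 > 0$ and $\lambda_0 = \lambda_1\Abs{\qa}\Abs{\qb} > 0$, which completes the argument. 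I do not anticipate a genuine obstacle here: the corollary is essentially a repackaging of Lemma~\ref{lemma:QUATERNION_PRODUCT_SQRT}, the only mild subtlety being the nonvanishing check just described.
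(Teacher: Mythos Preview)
Your proposal is correct and is exactly the intended derivation: the paper states this as an immediate corollary of Lemma~\ref{lemma:QUATERNION_PRODUCT_SQRT} (equivalently of Corollary~\ref{corollary:QUATERNION_SQRT_LINEAR_FORM} specialized to $\qa\qb$), with no separate proof given. Your additional check that $\qb + \qa[][*] \neq 0$ under the nonreality hypothesis is a nice point the paper leaves implicit.
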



\section{Solutions to the homogeneous singular Sylvester equation}
\label{sec:SOLUTION_OF_HOMOGENEOUS_SINGULAR_SYLVESTER_EQUATION}

We consider the general linear function $f(x) = \qa\qx -
\qx\qb$, which is called singular if there exists a nonzero quaternion $\qx$ such that
$f(x) = 0$. If $f$, and consequently the quaternions $\qa, \qb$, satisfy
this condition, then 
\begin{align}
  \qa\qx - \qx\qb = 0
  \label{eq:HOMOGENEOUS_SYLVESTER_EQUATION}
\end{align}
is referred to as a homogeneous singular Sylvester equation. In this section, we restate the existance and the general solution to~\eqref{eq:HOMOGENEOUS_SYLVESTER_EQUATION} previously presented in~\cite{tian1999_similarity} and provide constructive proofs. Moving beyond existing theory, we also provide a novel characterization of the nonzero solutions to~\eqref{eq:HOMOGENEOUS_SYLVESTER_EQUATION} by applying the quaternion square root, presented in Theorem~\ref{theorem:NONZERO_SOLUTIONS_OF_THE_HOMOGENEOUS_SYLVESTER_EQUATION}.

\begin{lemma}
\label{lemma:EXISTENCE_OF_SOLUTIONS_OF_THE_HOMOGENEOUS_SYLVESTER_EQUATION}
Let $\qa, \qb$ be nonreal quaternions. Then, 
\eqref{eq:HOMOGENEOUS_SYLVESTER_EQUATION} has a nonzero solution if and only if
$\qa \sim \qb$.
\end{lemma}

\begin{proof}
By Definition~\ref{definition:SIMILAR_QUATERNIONS}, there exists a nonzero
$\qp$ such that
\begin{align*}
  \qp[][-1]\qa\qp = \qb \quad \Longleftrightarrow \quad
  \qa\qp = \qp\qb \quad \Longleftrightarrow \quad
  \qa\qp - \qp\qb = 0.
\end{align*}
Hence, the nonzero solution $\qx = \qp$ yields the assertion of the theorem.
\end{proof}

\begin{corollary}
\label{corollary:SINGULAR_GENERAL_LINEAR_FUNCTION}
Let $\qa, \qb$ be nonreal quaternions. Then, the general linear function 
$f(x) = \qa\qx - \qx\qb$ is singular if and only if $\qa \sim \qb$. In that
case it holds
\begin{align}
  f(x) = (\ima)\qx - \qx(\imb).
  \label{eq:SINGULAR_GENERAL_LINEAR_FUNCTION}
\end{align}
\end{corollary}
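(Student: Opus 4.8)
The plan is to reduce the statement about singularity of $f$ directly to Theorem~\ref{theorem:EXISTENCE_OF_SOLUTIONS_OF_THE_HOMOGENEOUS_SYLVESTER_EQUATION} and then verify the reduced form \eqref{eq:SINGULAR_GENERAL_LINEAR_FUNCTION}. By definition $f$ is singular exactly when the homogeneous equation \eqref{eq:HOMOGENEOUS_SYLVESTER_EQUATION} has a nonzero solution, and since $\qa,\qb$ are assumed nonreal, Theorem~\ref{theorem:EXISTENCE_OF_SOLUTIONS_OF_THE_HOMOGENEOUS_SYLVESTER_EQUATION} says this is equivalent to $\qa \sim \qb$. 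So the first half of the corollary is immediate.

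For the formula \eqref{eq:SINGULAR_GENERAL_LINEAR_FUNCTION}, I would assume $\qa \sim \qb$ and invoke Lemma~\ref{lemma:SIMILAR_QUATERNIONS_CHARACTERIZATION}, which gives $\rea = \reb$ (and $\Abs{\qa} = \Abs{\qb}$, though only the equality of real parts is needed here). Writing $\qa = \rea + \ima$ and $\qb = \reb + \imb$, I would expand
\begin{align*}
  f(\qx) = \qa\qx - \qx\qb = (\rea)\qx + (\ima)\qx - (\reb)\qx - \qx(\imb) = (\rea - \reb)\qx + (\ima)\qx - \qx(\imb),
\end{align*}
using that the real scalar $\rea$ commutes with $\qx$. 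Since $\rea - \reb = 0$, the first term vanishes and we are left with \eqref{eq:SINGULAR_GENERAL_LINEAR_FUNCTION}.

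I do not anticipate a serious obstacle: the corollary is essentially a repackaging of the theorem together with the scalar-part half of the similarity characterization, plus a one-line expansion of the quaternion product. The only point requiring a modicum of care is that \eqref{eq:SINGULAR_GENERAL_LINEAR_FUNCTION} is only claimed in the singular case, so the derivation of the formula must be carried out under the hypothesis $\qa \sim \qb$, which is exactly where Lemma~\ref{lemma:SIMILAR_QUATERNIONS_CHARACTERIZATION} supplies $\rea = \reb$.
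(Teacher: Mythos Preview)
Your proposal is correct and matches the paper's (implicit) approach: the corollary is stated without proof precisely because it is an immediate consequence of Theorem~\ref{theorem:EXISTENCE_OF_SOLUTIONS_OF_THE_HOMOGENEOUS_SYLVESTER_EQUATION} together with $\rea = \reb$ from Lemma~\ref{lemma:SIMILAR_QUATERNIONS_CHARACTERIZATION}, and the expansion you wrote is exactly the computation \eqref{eq:SIMILAR_QUATERNIONS_CHARACTERIZATION_PROOF_A} already carried out in that lemma's proof.
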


When singular Sylvester equations are considered, the quaternions $\qa, \qb$
may, by property \eqref{eq:SINGULAR_GENERAL_LINEAR_FUNCTION} and without loss of
generality, be considered as pure. We will make use of this assumption in proofs
of the following theorems. However, the statements of the theorems will be
presented in their general form.

In order to solve \eqref{eq:HOMOGENEOUS_SYLVESTER_EQUATION} for similar
quaternions $\qa$ and $\qb$, we shall proceed as in the proof of
Lemma~\ref{lemma:SIMILAR_QUATERNIONS_CHARACTERIZATION}. There, we distinguished
between the following cases. Case~1: $\qa\qb \neq \qb\qa$ and Case~2: $\qa\qb =
\qb\qa$. According to Lemma~\ref{lemma:AXIS_PARALLEL_SAME_NORM_QUATERNIONS}, the
latter holds if and only if either Case~2a: $\qa = \qb$ or Case~2b: $\qa =
-\qb$. We show that the solution $\qx = \qp$ for Case~1 and Case~2a may be
represented in the same way as for Case~2b.

\begin{lemma}
\label{lemma:GENERAL_SOLUTION_OF_THE_HOMOGENEOUS_SYLVESTER_EQUATION}

Let $\qa,\qb$ be similar quaternions and $\qq$ an arbitrary quaternion. Then, the general
solution to \eqref{eq:HOMOGENEOUS_SYLVESTER_EQUATION} is given as
\begin{align}
  \qx = (\ima)\qq + \qq(\imb).
  \label{eq:GENERAL_SOLUTION_OF_THE_HOMOGENEOUS_SYLVESTER_EQUATION}
\end{align}

\end{lemma}

\begin{proof}

By Corollary \ref{corollary:SINGULAR_GENERAL_LINEAR_FUNCTION}, the quaternions $\qa,\qb$ may, without loss of generality, be considered as pure. Let $\qy$ be an arbitrary solution to \eqref{eq:HOMOGENEOUS_SYLVESTER_EQUATION}.
Then it holds $\qa\qy = \qy\qb$. Define $\qq = \qa\qy + \qy\qb$. Then we have
\begin{align*}
  \qx = \qa\qq + \qq\qb = \qa[][2]\qy + 2\qa\qy\qb + \qy\qb[][2] = \lambda\qy,
\end{align*}
where $\lambda = -4\Abs{\qa}^2$. Hence, every solution to 
\eqref{eq:HOMOGENEOUS_SYLVESTER_EQUATION} can be obtained from
\eqref{eq:GENERAL_SOLUTION_OF_THE_HOMOGENEOUS_SYLVESTER_EQUATION} with
an appropriate $\qq$. Now, let $\qq$ be an arbitrary quaternion. Then, by
substituting \eqref{eq:GENERAL_SOLUTION_OF_THE_HOMOGENEOUS_SYLVESTER_EQUATION}
into \eqref{eq:HOMOGENEOUS_SYLVESTER_EQUATION}, we obtain
\begin{align*}
  \qa\qx - \qx\qb = \Abs{\qb}^2\qq + \qa\qq\qb - \qa\qq\qb - \Abs{\qa}^2\qq = 0.
\end{align*}
Hence, \eqref{eq:GENERAL_SOLUTION_OF_THE_HOMOGENEOUS_SYLVESTER_EQUATION} is
the general solution to \eqref{eq:HOMOGENEOUS_SYLVESTER_EQUATION} for an arbitrary
quaternion $\qq$.
\end{proof}

\begin{theorem}
\label{theorem:NONZERO_SOLUTIONS_OF_THE_HOMOGENEOUS_SYLVESTER_EQUATION}
Let $\qa,\qb$ be similar quaternions and $\lambda, \mu$ arbitrary real numbers with $\Abs{\lambda} + \Abs{\mu\,\Im{(\qa + \qb)}} \neq 0$. Then, the nonzero solutions to \eqref{eq:HOMOGENEOUS_SYLVESTER_EQUATION} are given as
\begin{align}
  \qx = \lambda\sqrt{(\ima)(\imb[][*])} + \mu\,\Im{(\qa + \qb)},
  \label{eq:NONZERO_SOLUTIONS_OF_THE_HOMOGENEOUS_SYLVESTER_EQUATION}
\end{align}
which must satisfy $\ima\cdot\imx = 0$ if $\ima =- \imb$.
\end{theorem}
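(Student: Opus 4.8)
The plan is to reduce to the case of pure $\qa,\qb$ and then split into the same three cases as in the proof of Lemma~\ref{lemma:SIMILAR_QUATERNIONS_CHARACTERIZATION}, verifying in each that the expression in \eqref{eq:NONZERO_SOLUTIONS_OF_THE_HOMOGENEOUS_SYLVESTER_EQUATION} parametrises exactly the nonzero solutions. By Corollary~\ref{corollary:SINGULAR_GENERAL_LINEAR_FUNCTION}, equation \eqref{eq:HOMOGENEOUS_SYLVESTER_EQUATION} has the same solution set as $(\ima)\qx-\qx(\imb)=0$, so after replacing $\qa$ by $\ima$ and $\qb$ by $\imb$ we may assume $\qa,\qb$ are pure; then $\Abs{\qa}=\Abs{\qb}\neq0$, $\imb[][*]=-\qb$, $(\ima)(\imb[][*])=-\qa\qb$, and $\Im{(\qa+\qb)}=\qa+\qb$. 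We note at the outset that $\qa+\qb$ always solves \eqref{eq:HOMOGENEOUS_SYLVESTER_EQUATION}, because $\qa(\qa+\qb)-(\qa+\qb)\qb=\qa[][2]-\qb[][2]=-\Abs{\qa}^2+\Abs{\qb}^2=0$.

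\emph{Case $\qa\qb\neq\qb\qa$.} Here $\qa\times\qb\neq0$, so $-\qa\qb$ is nonreal, and $\qa+\qb\neq0$ since $\qa=-\qb$ would force $\qa\qb=\qb\qa$. Applying Lemma~\ref{lemma:QUATERNION_PRODUCT_SQRT} with $-\qa$ in place of $\qa$ gives $s:=\sqrt{(\ima)(\imb[][*])}=\mp\qa(\qa+\qb)/\Abs{\qa+\qb}=\mp(\qa+\qb)\qb/\Abs{\qa+\qb}$, whence $\qa s=\mp\qa(\qa+\qb)\qb/\Abs{\qa+\qb}=s\qb$; thus $s$ solves \eqref{eq:HOMOGENEOUS_SYLVESTER_EQUATION}, and so does $\qx=\lambda s+\mu(\qa+\qb)$ for every real $\lambda,\mu$. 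Conversely, $\qa+\qb$ is an invertible solution, so for any solution $\qy$ the quaternion $(\qa+\qb)^{-1}\qy$ commutes with the nonreal $\qb$ and hence lies in $\mathbb{R}+\mathbb{R}\qb$ by Lemma~\ref{lemma:AXIS_PARALLEL_QUATERNIONS}; therefore $\qy=\alpha(\qa+\qb)+\beta(\qa+\qb)\qb$ for some real $\alpha,\beta$, and since $(\qa+\qb)\qb$ is a real multiple of $s$ we get $\qy\in\mathbb{R}s+\mathbb{R}(\qa+\qb)$. Finally $s$ has nonzero real part (Corollary~\ref{corollary:QUATERNION_SQRT_IS_NOT_PURE}) while $\qa+\qb$ is pure and nonzero, so $\lambda s+\mu(\qa+\qb)$ vanishes only for $\lambda=\mu=0$, i.e.\ it is a nonzero solution exactly when $\Abs{\lambda}+\Abs{\mu\,\Im{(\qa+\qb)}}\neq0$.

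\emph{Cases $\qa=\qb$ and $\qa=-\qb$.} If $\qa=\qb$, then $(\ima)(\imb[][*])=-\qa[][2]=\Abs{\qa}^2>0$, so Lemma~\ref{lemma:QUATERNION_SQRT} gives $\sqrt{(\ima)(\imb[][*])}=\pm\Abs{\qa}$, while $\Im{(\qa+\qb)}=2\qa$; the equation reads $\qa\qx=\qx\qa$, whose solution set is $\mathbb{R}+\mathbb{R}\qa$ by Lemma~\ref{lemma:AXIS_PARALLEL_QUATERNIONS} and which is precisely $\{\lambda\sqrt{(\ima)(\imb[][*])}+\mu\,\Im{(\qa+\qb)}:\lambda,\mu\in\mathbb{R}\}$, with such an element nonzero iff $\Abs{\lambda}+2\Abs{\mu}\Abs{\qa}=\Abs{\lambda}+\Abs{\mu\,\Im{(\qa+\qb)}}\neq0$. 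If $\qa=-\qb$, then $(\ima)(\imb[][*])=\qa[][2]=-\Abs{\qa}^2<0$, so by Lemma~\ref{lemma:QUATERNION_SQRT} the values of $\sqrt{(\ima)(\imb[][*])}$ are exactly the pure quaternions of norm $\Abs{\qa}$, and $\Im{(\qa+\qb)}=0$, so \eqref{eq:NONZERO_SOLUTIONS_OF_THE_HOMOGENEOUS_SYLVESTER_EQUATION} reduces to $\qx=\lambda\sqrt{(\ima)(\imb[][*])}$; the equation reads $\qa\qx=-\qx\qa$, and since no nonzero real quaternion anticommutes with $\qa\neq0$, Lemma~\ref{lemma:AXIS_PERPENDICULAR_QUATERNIONS} identifies its nonzero solutions as exactly the nonzero pure $\qx$ with $\ima\cdot\imx=0$, which are precisely the claimed expressions subject to that side condition and with $\Abs{\lambda}\neq0$. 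Combining the three cases proves the theorem.

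\emph{Main obstacle.} The delicate case is $\qa\qb\neq\qb\qa$: one must use the explicit form of the square root from Lemma~\ref{lemma:QUATERNION_PRODUCT_SQRT} both to verify that $s$ solves \eqref{eq:HOMOGENEOUS_SYLVESTER_EQUATION} and, via the commutation argument with the invertible solution $\qa+\qb$, to see that $s$ together with $\qa+\qb$ already exhausts the (two-dimensional) solution set. The remaining work is careful bookkeeping of the nonvanishing condition $\Abs{\lambda}+\Abs{\mu\,\Im{(\qa+\qb)}}\neq0$ across the three cases — $\Im{(\qa+\qb)}$ being nonzero when $\qa\qb\neq\qb\qa$ and when $\qa=\qb$, but zero when $\qa=-\qb$ — and recognising that in the last case the side condition $\ima\cdot\imx=0$ is exactly what compensates for the square root there running over infinitely many values.
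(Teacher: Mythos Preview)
Your proof is correct, but it takes a genuinely different route from the paper's argument, and the difference is worth noting. The paper starts from the general solution $x=(\ima)q+q(\imb)$ of Theorem~\ref{theorem:GENERAL_SOLUTION_OF_THE_HOMOGENEOUS_SYLVESTER_EQUATION} and, in Case~1, expands an arbitrary $q$ in the basis $\{1,\qa,\qb,\qa\times\qb\}$, then simplifies the resulting expression using Lemma~\ref{lemma:SUM_OF_QUATERNION_CROSS_PRODUCTS} and Lemma~\ref{lemma:QUATERNION_PRODUCT_SQRT} to reach \eqref{eq:NONZERO_SOLUTIONS_OF_THE_HOMOGENEOUS_SYLVESTER_EQUATION}. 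You instead bypass Theorem~\ref{theorem:GENERAL_SOLUTION_OF_THE_HOMOGENEOUS_SYLVESTER_EQUATION} entirely: you verify directly that $s=\sqrt{(\ima)(\imb[][*])}$ and $\qa+\qb$ are solutions, and then use the clean observation that, since $\qa+\qb$ is an \emph{invertible} solution, $(\qa+\qb)^{-1}y$ must commute with $\qb$ for any solution $y$, which forces $y\in\mathbb{R}(\qa+\qb)+\mathbb{R}(\qa+\qb)\qb=\mathbb{R}s+\mathbb{R}(\qa+\qb)$. This avoids both the basis computation and Lemma~\ref{lemma:SUM_OF_QUATERNION_CROSS_PRODUCTS}, and gives a self-contained two-dimensionality argument. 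The paper's route has the advantage of showing explicitly how \eqref{eq:NONZERO_SOLUTIONS_OF_THE_HOMOGENEOUS_SYLVESTER_EQUATION} arises as a reparametrisation of the general solution formula; yours is shorter and needs fewer auxiliary lemmas. In Cases~2a and~2b the two proofs are closer in spirit, though again you solve the commutation/anticommutation equations directly rather than passing through the $aq+qb$ formula.
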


\begin{proof}
By Corollary \ref{corollary:SINGULAR_GENERAL_LINEAR_FUNCTION}, the quaternions $\qa,\qb$ may, without loss of generality, be considered as pure.
We examine the following cases to find the nonzero solutions to~\eqref{eq:HOMOGENEOUS_SYLVESTER_EQUATION}. Case~1: Let $\qa\qb\neq\qb\qa$. Then, by Lemma~\ref{lemma:AXIS_PARALLEL_QUATERNIONS}, it holds $\ima \ne \nu(\imb)$ for all real numbers $\nu$. Hence, for every quaternion $\qq$ there exist
real numbers $\lambda_0, \dots, \lambda_3$ such that
	\begin{align}
		\qq = \lambda_0 + \lambda_1\qa + \lambda_2\qb + \lambda_3(\qa\times\qb).
		\label{eq:GENERAL_QUATERNION_GIVEN_A_B}
	\end{align}
Substituting \eqref{eq:GENERAL_QUATERNION_GIVEN_A_B} into~\eqref{eq:GENERAL_SOLUTION_OF_THE_HOMOGENEOUS_SYLVESTER_EQUATION} we obtain the general solution to \eqref{eq:HOMOGENEOUS_SYLVESTER_EQUATION} as
	\begin{align*}
		\qx = \lambda_0(\qa+\qb) - (\lambda_1 + \lambda_2) (\Abs{\qa}^2 - \qa\qb) + \lambda_3[\qa(\qa\times\qb) + (\qa\times\qb)\qb].
	\end{align*}
By applying Lemma~\ref{lemma:QUATERNION_PRODUCT_SQRT} to $(\Abs{\qa}^2 - \qa\qb)$, and by Lemma~\ref{lemma:SUM_OF_QUATERNION_CROSS_PRODUCTS}, we further obtain~\eqref{eq:NONZERO_SOLUTIONS_OF_THE_HOMOGENEOUS_SYLVESTER_EQUATION}.
Case~2: Let $\qa\qb = \qb\qa$. Then, by Lemma~\ref{lemma:AXIS_PARALLEL_SAME_NORM_QUATERNIONS} either $\qa = \qb$ or $\qa = -\qb$ holds.
Case~2a: Let $\qa = \qb$. Then, by Lemma~\ref{lemma:GENERAL_SOLUTION_OF_THE_HOMOGENEOUS_SYLVESTER_EQUATION}, for an arbitrary quaternion $\qq$, the general solution to \eqref{eq:HOMOGENEOUS_SYLVESTER_EQUATION} is given as
	\vspace*{-2ex}
	\begin{align*}
		\qx = \qa\qq + \qq\qa = 2\qa(\req) - 2\qa\cdot\imq.
	\end{align*}
Then, with real numbers $\mu = \req$ and $\lambda = -2(\qa\cdot\imq)/\Abs{\qa}$ it follows
\begin{align*}
	2\qa(\req) = \mu\,(\qa + \qb), \qquad -2\qa\cdot\imq = \lambda \Abs{\qa} = \lambda\sqrt{\Abs{\qa}^2} = \lambda\sqrt{\qa\qb[][*]},
\end{align*}
which yields \eqref{eq:NONZERO_SOLUTIONS_OF_THE_HOMOGENEOUS_SYLVESTER_EQUATION}. 
Case~2b: Let $\qa=-\qb$. Then, by Lemma~\ref{lemma:GENERAL_SOLUTION_OF_THE_HOMOGENEOUS_SYLVESTER_EQUATION}, for an arbitrary quaternion $\qq$, the general solution to \eqref{eq:HOMOGENEOUS_SYLVESTER_EQUATION} is given as
	\begin{align*}
		\qx = \qa\qq - \qq\qa = 2(\qa\times\imq).
	\end{align*}
Since $\qx$ is a pure quaternion that satisfies $\qa\cdot\qx = 2\qa\cdot (\qa\times\imq) = 0$, with $\lambda = \Abs{\qx}/\Abs{\qa}$ and by Lemma~\ref{lemma:QUATERNION_SQRT} we have
\begin{align*}
  \qx = \lambda \Abs{\qa} \frac{\qx}{\Abs{\qx}} = \lambda \sqrt{\Abs{\qa}^2} \frac{\qx}{\Abs{\qx}} = \lambda \sqrt{-\Abs{\qa}^2} = \lambda\sqrt{\qa\qb[][*]},
\end{align*}
where the expression on the right hand side must satisfy $\qa\cdot\sqrt{\qa\qb[][*]} = 0$. Then, with $(\qa +\qb)=0$ we obtain \eqref{eq:NONZERO_SOLUTIONS_OF_THE_HOMOGENEOUS_SYLVESTER_EQUATION}. Across all cases, the solution in \eqref{eq:NONZERO_SOLUTIONS_OF_THE_HOMOGENEOUS_SYLVESTER_EQUATION} is nonzero if and only if $\Abs{\lambda} + \Abs{\mu(\qa + \qb)} \neq 0$, which completes the proof.
\end{proof}

\begin{remark}
The representation in \eqref{eq:NONZERO_SOLUTIONS_OF_THE_HOMOGENEOUS_SYLVESTER_EQUATION} identifies the quaternionic directions that span the solution space of~\eqref{eq:HOMOGENEOUS_SYLVESTER_EQUATION}. Specifically, the solution is decomposed into two orthogonal components aligned with the sum $\Im(\qa+\qb)$ and the quaternion square root $\sqrt{(\ima)(\imb[][*])}$. This explicit spanning set provides a direct geometric link between the equation parameters and the solution's orientation. Such structural transparency is vital in engineering systems, where the physical interpretation of quaternionic variables relative to rotation axes is critical.
\end{remark}


\section{Solutions to the inhomogeneous singular Sylvester equation}
\label{sec:SOLUTION_OF_INHOMOGENEOUS_SINGULAR_SYLVESTER_EQUATION}

In this section we present the conditions for solvability and the general
solution to the inhomogeneous singular Sylvester equation. For similar
quaternions $\qa,\qb$ and a nonzero quaternion $\qc$, the inhomogeneous singular
Sylvester equation is given as
	\begin{align}
		\qa\qx-\qx\qb = \qc.
		\label{eq:INHOMOGENEOUS_SINGULAR_SYLVESTER_EQUATION}
	\end{align}
If $\qa,\qb$ are real numbers, \eqref{eq:INHOMOGENEOUS_SINGULAR_SYLVESTER_EQUATION} has no solution, since $(\qa-\qb)\qx=0$ for all quaternions $\qx$. Therefore, in the following theorems, we assume that $\qa,\qb$ are nonreal quaternions. The solvability condition influences the structure of the solution.

\begin{lemma}\label{lemma:SOLVABILITY_OF_THE_INHOMOGENEOUS_SINGULAR_SYLVESTER_EQUATION}
Let $\qa,\qb$ be similar quaternions and $\qc$ a nonzero quaternion. Then, \eqref{eq:INHOMOGENEOUS_SINGULAR_SYLVESTER_EQUATION} has a solution if and only if $\qa\qc =\qc\qb[][*]$.
\end{lemma}

\begin{proof}
By Corollary~\ref{corollary:SINGULAR_GENERAL_LINEAR_FUNCTION}, we may assume that $\qa,\qb$ are pure quaternions. 
Since $\Abs{\qa} = \Abs{\qb}$, we have
	\begin{align*}
		\qa\qc = \qa(\qa\qx-\qx\qb) = -\Abs{\qa}^2\qx - \qa\qx\qb =  -\qa\qx\qb - \qx\Abs{\qb}^2 = -(\qa\qx-\qx\qb)\qb = -\qc\qb,
	\end{align*}
and hence the assertion of the theorem.
\end{proof}

\begin{lemma}\label{lemma:SOLUTION_OF_THE_INHOMOGENEOUS_SINGULAR_SYLVESTER_EQUATION}
Let $\qa,\qb$ be similar quaternions and $\qc$ a nonzero quaternion with $\qa\qc =\qc\qb[][*]$. Then, for an arbitrary quaternion $\qq$, the general solution to \eqref{eq:INHOMOGENEOUS_SINGULAR_SYLVESTER_EQUATION} is given as
	\begin{align}
		\qx = (\ima)\frac{\qq - \qc}{4\Abs{\ima}^2} + \frac{\qq + \qc}{4\Abs{\imb}^2}(\imb).
		\label{eq:GENERAL_SOLUTION_OF_THE_INHOMOGENEOUS_SYLVESTER_EQUATION}
	\end{align}
\end{lemma}

\begin{proof}
By Corollary~\ref{corollary:SINGULAR_GENERAL_LINEAR_FUNCTION}, we may assume that $\qa,\qb$ are pure quaternions. Substituting $\qc=\qa\qa[][-1]\qc$ and $\qc = \qc\qb[][-1]\qb$ into \eqref{eq:INHOMOGENEOUS_SINGULAR_SYLVESTER_EQUATION} yields
	\begin{align}
		0 = \qa\qx - \qx\qb - \frac{1}{2}(\qa\qa[][-1]\qc + \qc\qb[][-1]\qb) = \qa(\qx + \frac{\qa\qc}{2\Abs{\qa}^2}) - (\qx - \frac{\qc\qb}{2\Abs{\qb}^2})\qb.
		\label{eq:SINGULAR_SYLVESTER_EQUATION_PROOF_SOLUTION_INHOMOGENEOUS}
	\end{align}
Applying Lemma~\ref{lemma:GENERAL_SOLUTION_OF_THE_HOMOGENEOUS_SYLVESTER_EQUATION} to the homogeneous singular Sylvester equation \eqref{eq:SINGULAR_SYLVESTER_EQUATION_PROOF_SOLUTION_INHOMOGENEOUS}, we have
	\begin{align}
		\qx + \frac{\qa\qc}{2\Abs{\qa}^2} = \qa\qq + \qq\qb \qquad \Longleftrightarrow \qquad \qx = \qa\qq + \qq\qb - \frac{\qa\qc}{2\Abs{\qa}^2},
		\label{eq:QX_PROOF_SOLUTION_INHOMOGENEOUS}
	\end{align}
for an arbitrary quaternion $\qq$.
Substituting $\qa\qc = \frac{1}{2}(\qa\qc - \qc\qb)$ into \eqref{eq:QX_PROOF_SOLUTION_INHOMOGENEOUS} yields \eqref{eq:GENERAL_SOLUTION_OF_THE_INHOMOGENEOUS_SYLVESTER_EQUATION}.
\end{proof}

\begin{theorem}\label{theorem:SOLUTION_OF_THE_INHOMOGENEOUS_SINGULAR_SYLVESTER_EQUATION_ROOTS}
Let $\qa,\qb$ be similar quaternions and $\qc$ a nonzero quaternion with $\qa\qc =\qc\qb[][*]$. Then, for arbitrary real numbers $\lambda,\mu$ the general solution to \eqref{eq:INHOMOGENEOUS_SINGULAR_SYLVESTER_EQUATION} is given as
	\begin{align}
		\qx = \lambda\sqrt{(\ima)(\imb[][*])} + \mu\Im(\qa + \qb) + \frac{\qc}{4\Abs{\ima}^2}\Im(\qb-\qa).
		\label{eq:GENERAL_SOLUTION_OF_THE_INHOMOGENEOUS_SYLVESTER_EQUATION_ROOTS}
	\end{align}
\end{theorem}

\begin{proof}
From Lemma~\ref{lemma:GENERAL_SOLUTION_OF_THE_HOMOGENEOUS_SYLVESTER_EQUATION} and Theorem~\ref{theorem:NONZERO_SOLUTIONS_OF_THE_HOMOGENEOUS_SYLVESTER_EQUATION}, for an arbitrary quaternion $\qq$ there exist real numbers $\lambda, \mu$ such that
\begin{align*}
	(\ima)\qq + \qq(\imb) = \lambda\sqrt{(\ima)(\imb[][*])} + \mu\Im(\qa + \qb),
\end{align*}
which applied to~\eqref{eq:GENERAL_SOLUTION_OF_THE_INHOMOGENEOUS_SYLVESTER_EQUATION} yields~\eqref{eq:GENERAL_SOLUTION_OF_THE_INHOMOGENEOUS_SYLVESTER_EQUATION_ROOTS}.
\end{proof}

\section{Conclusion}
We have derived necessary and sufficient conditions for the existence of a zero-cost solution to Wahba's problem with two vector observations and provided a closed-form parameterization of the solution set in the quaternion domain. The analysis is based on a direct algebraic connection between Wahba's cost function and the homogeneous singular Sylvester equation in quaternions. This struion estimation. This structural connection provides a pathway for extending the methodology to a broader class of problems in quaternion analysis.

\section*{Data Availability Statement}
Data sharing is not applicable to this article as no datasets were generated or analyzed during the current study.

\section*{Acknowledgement}
The authors would like to thank the Federal Ministry of Research, Technology, and Space (BMFTR) for its support as part of the research program Communication Systems “Souverän. Digital. Vernetzt.”. Joint project 6G-life, project identification number: 16KIS2413K and the German Research Foundation (DFG, Deutsche Forschungsgemeinschaft) as part of Germany’s Excellence Strategy – EXC 2050/2 – Project ID 390696704 – Cluster of Excellence “Centre for Tactile Internet with Human-in-the-Loop” (CeTI) of TUD Dresden University of Technology.

\bibliographystyle{unsrt}
\bibliography{paper.bib}

\end{document}